\documentclass[reqno,a4paper]{amsart}

\usepackage{graphicx} 
\usepackage[a4paper,margin=1.25in]{geometry}
\usepackage{amsfonts,amsthm,amssymb,epsfig,amsmath}  
\usepackage{tikz}
\usetikzlibrary{arrows.meta,calc,intersections}
\usepackage{enumitem}
\usepackage{framed}
\usepackage{tikz-cd}

\DeclareMathOperator{\Fix}{Fix}
\DeclareMathOperator{\Centr}{Centr}
\DeclareMathOperator{\Crit}{Crit}

\DeclareMathOperator{\Diff}{Diff}
\DeclareMathOperator{\SL}{SL}
\DeclareMathOperator{\SO}{SO}
\DeclareMathOperator{\id}{id}

\newcommand{\s}{\mathbb{S}}
\newcommand{\N}{\mathbb{N}}
\newcommand{\Q}{\mathbb{Q}}
\newcommand{\R}{\mathbb{R}}
\newcommand{\RP}{\mathbb{R}\mathbb{P}}
\newcommand{\Z}{\mathbb{Z}}

\newcommand{\Sn}{\mathbb{S}^n}
\newcommand{\E}{\mathcal{E}}

\newtheorem{thm}{Theorem}

\newtheorem{theorem}{Theorem} 
\numberwithin{theorem}{section}
\numberwithin{equation}{section}
\newtheorem{lemma}[theorem]{Lemma}     
\newtheorem{corollary}[theorem]{Corollary}
\newtheorem{proposition}[theorem]{Proposition}

\theoremstyle{definition}

\numberwithin{example}{section}

\title[Groups of diffeomorphisms with prescribed critical regularity]{Finitely generated groups of diffeomorphisms with prescribed critical regularity in every dimension}
\author{Maximiliano Escayola}
\address{School of Mathematics, Korea Institute for Advanced Study (KIAS), Seoul 02455, Korea}
\email{maxiescayola@kias.re.kr}

\author{Carl-Fredrik Nyberg-Brodda}
\address{June E Huh Center for Mathematical Challenges, Korea Institute for Advanced Study (KIAS), Seoul 02455, Korea}
\email{cfnb@kias.re.kr}

\date{\today}

\keywords{Critical regularity; rigidity; sphere diffeomorphism groups; higher-rank lattices}
\subjclass[2020]{Primary: 37C85; Secondary: 37C05} 

\thanks{The first author is supported by KIAS Individual Grant MG107601 at Korea Institute for Advanced Study (KIAS). The second author is supported by KIAS Individual Grant HP094701 at the June E Huh Center for Mathematical Challenges of KIAS. Both authors are supported by the Mid-Career Researcher Program (RS-2023-00278510) through the National Research Foundation funded by the government of Korea.}

\begin{document}

\begin{abstract}
Let $n\ge 2$, and let $M$ be either the $n$-dimensional sphere $\s^n$ or the real projective space $\RP^n$. Given $\alpha\ge 1$, we construct a finitely generated group $G_{n,\alpha}$ that embeds into the group of $C^\alpha$-diffeomorphisms of $M$, but does not embed into the group of $C^\beta$-diffeomorphisms of $M$ for any $\beta>\alpha$.  
\end{abstract}

\maketitle

\section{Introduction}

\noindent Let $M$ be a compact, connected manifold, and let $\alpha\in\{0\}\cup[1,\infty]$. This work is concerned with understanding the local algebraic structure of the group $\Diff^\alpha(M)$, and how this structure changes as the differentiability parameter $\alpha$ varies. Here, if $\alpha\notin\N$, $\Diff^\alpha(M)$ denotes the group of $C^{\lfloor \alpha\rfloor}$-diffeomorphisms of $M$ whose derivatives of order $\lfloor \alpha\rfloor$ are ($\alpha-\lfloor \alpha\rfloor$)-Hölder continuous; for integer $\alpha$, we use the usual $C^\alpha$-regularity. When $M$ is oriented, we denote by $\Diff_+^\alpha(M)$  the subgroup of orientation-preserving elements of $\Diff^\alpha(M)$. With this convention, $\Diff^0(M)$ denotes the group of homeomorphisms of $M$, and $\Diff_+^0(M)$ its orientation-preserving subgroup. 

It was shown by Whittaker~\cite{whittaker} that the global algebraic structure of $\Diff^0(M)$ detects the underlying manifold: that is, $\Diff^0(M)\cong\Diff^0(N)$ implies that $M$ and $N$ are homeomorphic. Rubin \cite{Rubin} generalized this to homeomorphism groups of other topological spaces. In his work~\cite{filipkiewicz}, Filipkiewicz showed that, given $\alpha,\beta\in\N$, $\Diff^\alpha(M)\cong\Diff^\beta(N)$ implies $\alpha=\beta$, and there exists a $C^\alpha$-diffeomorphism between $M$ and $N$ conjugating the two diffeomorphism groups. More recently, Kim, Koberda, and de la Nuez González~\cite{kim-koberda-delanuez} considerably strengthened these reconstruction results by showing that the first-order theory of the diffeomorphism group already detects both the manifold and the regularity. Recall that two groups $G$ and $H$ are said to be elementarily equivalent, and we write $G\equiv H,$ if they satisfy the same first-order sentences in the language of groups. They proved that, for $\alpha,\beta\in\{ 0 \}\cup [1,\infty]$, the equivalence $\Diff^\alpha(M)\equiv\Diff^\beta(N)$ implies that $\alpha=\beta$ and $M$ and $N$ are $C^\alpha$-diffeomorphic. 

In their pioneering work~\cite{KimKoberda}, Kim and Koberda showed that, when $M$ is the circle $\R/\Z$ or a compact interval, the differentiability class can already be detected at the level of finitely generated groups. More precisely, for every $\alpha\ge 1$, there exists a finitely generated group $G_\alpha$ whose critical regularity on $M$ is equal to $\alpha$. In this context, we define the \emph{critical regularity} of a group $G$ on $M$ by 
\[
\Crit_M(G)=\alpha
\]
if $G$ embeds into $\Diff^\alpha(M)$ but does not embed into $\Diff^\beta(M)$ for any $\beta>\alpha$. Mann and Wolff~\cite{mann-wolff} later gave a different approach to the problem of constructing groups of prescribed critical regularity, based on reconstructing individual homeomorphisms from the algebraic structure of finitely generated groups containing them. This allowed them to relate critical regularity to differentiable rigidity and, in particular, to obtain an independent proof of the one-dimensional existence result.

In this note, we construct groups of prescribed critical regularity on the $n$-dimensional sphere~$\s^n$ and on the real projective space~$\RP^n$, for every $n\ge2$. 
\begin{thm}\label{thm: main} Let $n\ge2,$ and let $M$ be either $\Sn$ or  $\RP^n$. For every $\alpha\ge1$, there exists a finitely generated group $G_{n,\alpha}$ such that $\Crit_{M}(G_{n,\alpha})=\alpha$.
\end{thm}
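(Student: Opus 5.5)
We will build $G_{n,\alpha}$ as a product $G_{n,\alpha}=\Gamma_n\times G_\alpha$. Here $G_\alpha$ is the Kim--Koberda group \cite{KimKoberda} with $\Crit_{[0,1]}(G_\alpha)=\alpha$ (equivalently on $\R/\Z$), and $\Gamma_n$ is a higher-rank lattice that rigidly fixes a one-dimensional "direction" in $M$. The keywords suggest this choice. For concreteness, take $\Gamma_n$ to be a finite-index subgroup of $\SL_n(\Z)$, or of $\SO(n,1)$- or $\SL_{n+1}$-type. Its standard action on $\s^{n}$ comes from the projective action of $\SL_{n+1}(\R)$ on rays in $\R^{n+1}$, and on $\RP^n$ from projectivization. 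The strategy has two halves: an embedding into $\Diff^\alpha(M)$, and an obstruction for $\beta>\alpha$.

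\textbf{Lower bound (embedding).} We realise $\Gamma_n$ as acting smoothly, in fact projectively, on $M$ so that the centralizer of its action contains a one-parameter family of transformations along a fibration. A cleaner option uses the linear action of $\SL_n(\Z)$ on $\R^n\setminus\{0\}\cong \s^{n-1}\times\R$. Compactifying radially gives $\s^n$, with two fixed points $0$ and $\infty$, or $\RP^n$ after the antipodal quotient. The radial direction $\R_{>0}$ commutes with linear maps. We let $G_\alpha$ act on the radial coordinate $\log r\in\R$. To do this we take a $C^\alpha$ action of $G_\alpha$ on $[0,1]$ that is $C^\infty$-tangent to the identity at the endpoints, or supported in the interior. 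We then transport it to a compact radial annulus via $r\mapsto$ a reparametrization, acting trivially on the sphere factor. This action commutes with the linear action of $\Gamma_n$ exactly. Hence we get a faithful $C^\alpha$ action of $\Gamma_n\times G_\alpha$ on $\R^n$, and it extends to $M$ as the identity near $0$ and $\infty$. For $\RP^n$ we use $\pm$-invariance: the radial action commutes with $-\id$.

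\textbf{Upper bound (obstruction).} Suppose $\rho:\Gamma_n\times G_\alpha\to\Diff^\beta(M)$ is injective with $\beta>\alpha\ge1$. We apply Zimmer-program rigidity for actions of $\SL_m(\Z)$-type lattices on $n$-manifolds. The key step is to show that $\rho(\Gamma_n)$ is, up to conjugation on an open invariant set, the standard linear action. We would invoke known local rigidity and classification results, such as Brown--Fisher--Hurtado for $\SL_{n+1}$ and ``Zimmer's conjecture'' in the critical dimension, and, in the projective case, results of Brown--Rodriguez Hertz--Wang. Choosing the lattice rank so that $\Gamma_n$ cannot act through a finite quotient on $M$ is essential; otherwise $\Gamma_n$ would tell us nothing. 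Given a standard-like action, the centralizer of $\rho(\Gamma_n)$ in $\Diff^\beta(M)$ preserves the invariant structure, meaning the fixed points and the radial lines/orbit closures. Because $\Gamma_n$ acts minimally on the sphere of directions, the centralizer must act on the radial foliation by a $C^\beta$ action on the leaf space. That leaf space is an interval, $[0,\infty]$, or a circle after a quotient. So $\rho(G_\alpha)$ yields a $C^\beta$ action of $G_\alpha$ on a one-manifold. We also need faithfulness: an element acting trivially on the leaf space preserves every line and commutes with $\Gamma_n$, so by density it is a radial homothety-like map. We handle this by choosing $G_\alpha$ with trivial center and using a finite-index or perfectness argument to kill the remaining abelian part. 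We then contradict $\Crit(G_\alpha)=\alpha$.

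\textbf{Main obstacle.} The hard step is the rigidity input. We must show that every faithful $C^\beta$ action of $\Gamma_n$ on $M$, not merely a volume-preserving or $C^\infty$ one, has the standard form on an invariant open set with the radial leaf structure. This has to hold in regularity only $C^{\beta}$ with $\beta$ possibly near $1$, and across all choices of $\rho$. I would first try to reduce to a global fixed point, using property (T) and a Thurston/Reeb–type stability argument adapted to $C^1$ with Navas' (T)-fixed-point techniques. From the fixed point, I would linearize the derivative representation at that point via Margulis superrigidity, and then use the centralizer of the derivative to extract the radial direction near the fixed point. Some care is also needed to make the Kim--Koberda action compatible with the compactification in the lower bound, but that part is routine.
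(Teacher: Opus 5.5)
\textbf{There is a genuine gap, and it is in the construction itself, before any rigidity is used.} Your lower bound needs $G_\alpha$, acting on $\log r$ and trivially on directions, to commute with the linear action of $\Gamma_n$. Among such maps, only homotheties $x\mapsto cx$ do this. Write $\hat g(x)=e^{\phi(\log\lVert x\rVert)}x$ with $\phi(s)=g(s)-s$ compactly supported. Then $\hat gA=A\hat g$ forces $\phi(\log\lVert Ax\rVert)=\phi(\log\lVert x\rVert)$ for every $A\in\Gamma_n$. Now iterate a hyperbolic $A\in\Gamma_n$: it pushes every $x$ outside its contracting subspace to infinity in norm, hence out of the support, so $\phi\equiv 0$ and $g=\id$. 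The norm is simply not a $\Gamma_n$-invariant coordinate.

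This cannot be repaired inside a direct product. By Lemma~\ref{Lem:centralizer-of-P(SL)}, the centralizer in $\Diff^0(\s^n)$ of any finite-index subgroup of $\rho(\SL_{n+1}(\Z))$ is $\{\id_{\s^n},A\}$, and on $\RP^n$ it is trivial. So for $\Gamma_n$ of finite index in $\SL_{n+1}(\Z)$ and $\alpha>1$, Theorem~\ref{Thm:BrownEtAl} shows that $\Gamma_n\times G_\alpha$ has no faithful $C^\alpha$ action on $M$ at all. Your obstruction half, if it worked, would apply at $\beta=\alpha$ too.

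There is also a separate problem if ``compactifying'' means adding a single point $\infty$. Then the linear action is not even $C^1$ there: in the chart $y=x/\lVert x\rVert^2$ it reads $y\mapsto\lVert y\rVert^2Ay/\lVert Ay\rVert^2$, which is $1$-homogeneous but nonlinear for non-conformal $A$.

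The rigidity sketch has further problems, though they are secondary. The classification you need exists for lattices in $\SL_{n+1}(\R)$ on $n$-manifolds, not for $\SL_n(\Z)$ on $n$-manifolds. The standard projective actions have no global fixed point. Navas-type property (T) arguments are one-dimensional.

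\textbf{The missing idea} is to make the one-dimensional group commute not with the lattice but with a compact group that the lattice can ``recognize.'' The paper takes $K\cong\SO(n)$, acting by the block rotations $\rho(\mathrm{diag}(B,1))$. Its orbits are the latitudes, so $t$ \emph{is} a $K$-invariant coordinate, and $\hat H_\alpha$ (which acts on $t$ and rescales $v$) commutes with $K$.

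The group is $G_{n,\alpha}=\langle\Gamma,R_1,\dots,R_{n-1},\hat H_\alpha\rangle$, where $\Gamma=\rho(\SL_{n+1}(\Z))$ and the $R_i\in K\cap\rho(\SL_{n+1}(\Q))$ have squares generating a dense subgroup of $K$. It is not a product, and $\hat H_\alpha$ does not commute with $\Gamma$. The argument then runs as follows.
\begin{enumerate}
\item Normalize a hypothetical $\sigma$ into $\Diff^\beta$ so that $\sigma|_\Gamma=\id$, using Brown--Rodriguez Hertz--Wang.
\item Since $R_i$ commensurates $\Gamma$, the element $R_i^{-1}\sigma(R_i)$ centralizes a finite-index subgroup of $\Gamma$. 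Hence $\sigma(R_i)^2=R_i^2$.
\item Therefore $\sigma(\hat H_\alpha)$ centralizes $\overline{\langle R_1^2,\dots,R_{n-1}^2\rangle}=K$.
\item It then descends to $\s^n/K\cong[-\pi/2,\pi/2]$ with abelian kernel, and simplicity of $[H_\alpha,H_\alpha]$ gives the contradiction.
\end{enumerate}
Your final faithfulness step, discarding an abelian kernel via a simple subgroup, matches this last step. Without the $K$/$R_i$ mechanism, however, there is no interval to descend to.
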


A key ingredient in the proof of Theorem~\ref{thm: main} is the rigidity theorem of Brown, Rodriguez Hertz, and Wang for lattice actions on spheres and real projective spaces. Since comparable rigidity results are not presently available for general manifolds, we do not know how to extend our construction beyond this setting. To the best of our knowledge, these are the first examples, for manifolds of dimension greater than one, of finitely generated groups whose critical regularity is known to be finite and positive.

\subsection{Outline of the construction} Before beginning the construction, we would like to briefly explain the main ideas behind it. Although we do not directly use their approach, the underlying idea arose from trying to implement a technique proposed by Mann and Wolff \cite{mann-wolff} for constructing finitely generated groups of prescribed critical regularity, in any differentiability class and on any smooth manifold $M$. Roughly speaking, given $\alpha\ge1$, a group of critical regularity equal to $\alpha$ can be obtained by finding a group $\Gamma\le \Diff^\infty(M)$ with the following properties:
\begin{enumerate}
    \item $C^\beta$-rigidity: every injective group homomorphism $\Gamma\to\Diff^\beta(M)$ comes from conjugation by a $C^\beta$-diffeomorphism of $M$.
    \item Map recognition: there exists some element $f\in\Diff^\alpha(M)\setminus\bigcup_{\beta>\alpha}\Diff^\beta(M)$ such that, for every $h\in\Diff^\alpha(M)$, the existence of a group isomorphism $\phi:\langle\Gamma, f\rangle\to\langle\Gamma,h\rangle$ with $\phi|_\Gamma=\text{id}_\Gamma$ and $\phi(f)=h$ implies $f=h$.
\end{enumerate}
Notice that the two conditions above easily imply $\Crit_M(\langle\Gamma,f\rangle)=\alpha$. Indeed, suppose that there exists an embedding $\phi \colon \langle\Gamma,f\rangle\to\Diff^\beta(M)$ with $\beta>\alpha$. By the rigidity property of $\Gamma$, after normalizing the embedding, that is, conjugating $\phi$ by a suitable element, we may assume that its restriction to $\Gamma$ is the identity. The map recognition property then implies that $\phi(f)=f$, contradicting the fact that $f$ is not of class $C^\beta$.

Let $n\ge2$. The construction for $\RP^n$ follows the same strategy as that for $\s^n$, so we describe only the latter. Our construction starts with a rigid copy $\Gamma\simeq\SL_{n+1}(\Z)$ inside $\Diff_+^{\infty}(\s^n)$, which we enlarge by adjoining finitely many elements of a subgroup $K\simeq \SO(n)$ preserving latitudes and acting transitively on each of them. We finally add a copy $\hat{H}_\alpha$ of the one-dimensional group $H_\alpha$ provided by Kim and Koberda (see Theorem \ref{thm: kim-koberda} below). The additional elements are chosen so that their squares generate a dense subgroup of $K$, while $\hat{H}_\alpha$ commutes with $K$.

To prove that the resulting group, say $G_{n,\alpha}$, has critical regularity $\alpha$, we suppose that it admits an embedding into $\Diff^\beta(\s^n)$ for some $\beta>\alpha$. Rigidity allows us to normalize the embedding so that it is the identity on $\Gamma$. A recognition property for the chosen elements of $K$, together with density, then forces the image of $\hat{H}_\alpha$ to centralize $K$. Passing to the orbit space $\s^n/K$, which is naturally an interval, produces an action of $[H_\alpha,H_\alpha]$ by $C^\beta$ interval diffeomorphisms. The simplicity of $[H_\alpha,H_\alpha]$ and the structure of the kernel of this quotient action imply that this action is faithful, contradicting the fact that $[H_\alpha,H_\alpha]$ has critical regularity $\alpha$ on the interval.

\section{Preliminaries}

\noindent In this section, we collect the main ingredients that will be used in the construction and proof of Theorem~\ref{thm: main}. We begin with the one-dimensional examples of Kim and Koberda, and then recall the rigidity result of Brown, Rodriguez Hertz, and Wang for lattice actions on spheres and real projective spaces.  
\subsection{Critical regularity in dimension one}

The following result is one of the main theorems of \cite{KimKoberda} (see Theorem 1.4 and Theorem A.3 therein). We state it in a slightly different form, better suited to our purposes. For the statement, we denote by $\Diff_c^\alpha(-1,1)$ the subgroup of compactly supported elements of $\Diff_+^\alpha(-1,1)$.
\begin{theorem}[Kim and Koberda]\label{thm: kim-koberda}
    Let $I$ be a compact interval. For every $\alpha\ge 1$, there exists a finitely generated group $H_\alpha\le\Diff_+^\alpha(I)$ such that:
    \begin{enumerate}
        \item Its commutator subgroup $[H_\alpha,H_\alpha]$ is infinite, nonabelian, and simple.
        \item $[H_\alpha,H_\alpha]$ does not embed into $\Diff_+^\beta(I)$ for any $\beta>\alpha$.
    \end{enumerate} 
    Moreover, $\Diff_+^\alpha(I)$  may be replaced by $\Diff_c^\alpha(-1,1)$.
\end{theorem}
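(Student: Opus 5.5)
The plan is to read off the statement from the main results of \cite{KimKoberda}, adjusting only the packaging. Fix $\alpha\ge1$. First I would take $H_\alpha$ to be the finitely generated group built in \cite[Theorem 1.4]{KimKoberda}. Its defining properties are that it lies in $\Diff_+^\alpha(I)$, that its commutator subgroup is simple, and that $[H_\alpha,H_\alpha]$ (indeed every finite-index subgroup, and every nontrivial subgroup of the relevant type) admits no injective homomorphism into $\Diff_+^\beta(I)$ for $\beta>\alpha$. Property (2) is then exactly the non-embedding part of that theorem, specialised to the commutator subgroup. If their statement is phrased for $H_\alpha$ itself rather than for $[H_\alpha,H_\alpha]$, I would invoke the refinement in their appendix, \cite[Theorem A.3]{KimKoberda}, which concerns the commutator subgroup.

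For (1), simplicity is quoted directly, and the remaining adjectives follow formally. Since $H_\alpha$ is not abelian in their construction, $[H_\alpha,H_\alpha]$ is a nontrivial simple group. If it were abelian it would be cyclic of prime order. This is impossible, because $\Diff_+^\alpha(I)$, and in fact $\mathrm{Homeo}_+(I)$, is torsion-free: a finite-order orientation-preserving homeomorphism of an interval is the identity, by monotonicity. Hence $[H_\alpha,H_\alpha]$ is nonabelian, and it is infinite because it is nontrivial and torsion-free.

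For the ``moreover'' clause, I would not try to conjugate $I$ onto a subinterval of $(-1,1)$ and extend by the identity. That extension need not be $C^\alpha$ at the endpoints unless the maps are $C^\alpha$-tangent to the identity there, so the naive approach fails. Instead I would use that the Kim--Koberda generators are constructed with compact support in the interior of $I$, which is the content of the compactly supported version in their appendix. After an affine identification $\mathrm{int}(I)\cong(-1,1)$, the whole group then lies in $\Diff_c^\alpha(-1,1)$. The non-embedding property (2) is unaffected: $\Diff_c^\beta(-1,1)$ embeds in $\Diff_+^\beta(I)$ by extending by the identity, which is harmless precisely because the supports are compact. So it suffices to rule out embeddings into $\Diff_+^\beta(I)$, which we already did.

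The main obstacle is bookkeeping rather than mathematics: matching hypotheses precisely. I would check that Kim--Koberda's non-embedding conclusion is stated for the commutator subgroup, and uniformly for all $\beta>\alpha$, including the Hölder (non-integer) cases and $\alpha=1$. I would also check that the compactly supported realisation holds simultaneously with simplicity of the commutator subgroup. If the appendix only gives the interval version, I would verify from the construction that the generators have support in a fixed compact subinterval.
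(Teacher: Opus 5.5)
Your proposal is correct and matches the paper, which gives no proof of its own and simply cites Theorems~1.4 and A.3 of \cite{KimKoberda}. Your extra checks are sound but go beyond that citation: that ``infinite and nonabelian'' follow from simplicity plus torsion-freeness of $\mathrm{Homeo}_+(I)$, and that the $\Diff_c^\alpha(-1,1)$ version must come from the compact supports in the Kim--Koberda construction rather than from extending by the identity.
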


\subsection{Lattices and projective actions of $\SL$}\label{Subsec:lattices-background}
Recall that, for $n \geq 2$, the standard projective action $\rho$ of $\SL_{n+1}(\R)$ on $\mathbb{S}^{n}$ is defined by  
\begin{equation}\label{eq:representation of SL(Z)}
\rho(g)(u) = \frac{gu}{\lVert gu \rVert}, \quad  g \in \SL_{n+1}(\R),\quad u \in \mathbb{S}^{n}.
\end{equation}
It is well known that this gives an injective homomorphism  $\rho \colon \SL_{n+1}(\R) \to \Diff^\infty_+(\mathbb{S}^{n})$. For completeness, we include a proof in the following lemma.
\begin{lemma}
The map $\rho$ is an injective homomorphism into $\Diff_+^\infty(\s^n)$.    
\end{lemma}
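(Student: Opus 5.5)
We have to check four things: each $\rho(g)$ is a smooth diffeomorphism of $\s^n$, $\rho$ is a homomorphism, each $\rho(g)$ preserves orientation, and $\rho$ is injective.

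\emph{Well-definedness and smoothness.} Since $g$ is invertible, $gu\neq 0$ for every $u\in\s^n$. Hence $\rho(g)$ is the restriction to $\s^n$ of the map $x\mapsto gx/\lVert gx\rVert$. This map is smooth on $\R^{n+1}\setminus\{0\}$ and sends $\s^n$ into $\s^n$, so $\rho(g)$ is a $C^\infty$ self-map of $\s^n$.

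\emph{Homomorphism property.} Positive scalars commute with the linear map $g$, and the normalization $x\mapsto x/\lVert x\rVert$ is insensitive to positive rescaling. Therefore
\[
\rho(g)\rho(h)(u)=\frac{g\,(hu/\lVert hu\rVert)}{\lVert g\,(hu/\lVert hu\rVert)\rVert}=\frac{ghu}{\lVert ghu\rVert}=\rho(gh)(u).
\]
Clearly $\rho(\id)=\id$. It follows that $\rho(g^{-1})$ is a smooth inverse of $\rho(g)$, so each $\rho(g)$ lies in $\Diff^\infty(\s^n)$.

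\emph{Orientation.} I would use connectedness. The group $\SL_{n+1}(\R)$ is connected, for instance because it is generated by elementary matrices, each of which is joined to the identity by the path $t\mapsto E_{ij}(ts)$. The map $g\mapsto\rho(g)$ is continuous into $C^1$ maps of $\s^n$, so the sign of the degree (equivalently, of the Jacobian determinant) of $\rho(g)$ is locally constant in $g$. It equals $+1$ at $g=\id$, hence $+1$ for all $g$. A direct alternative is the formula $\det D\rho(g)_u=\det g/\lVert gu\rVert^{n+1}>0$, obtained from the radial decomposition of $g$; but the connectedness argument avoids any computation.

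\emph{Injectivity.} This is the only step with a small subtlety. Suppose $\rho(g)=\id$. Then $gu$ is a positive multiple of $u$ for every unit vector $u$, so every nonzero vector is an eigenvector of $g$ with a positive eigenvalue. Applying this to $e_i$, $e_j$ and $e_i+e_j$ for $i\neq j$ forces all eigenvalues to be equal, so $g=\lambda\,\id$ with $\lambda>0$. Finally $\det g=\lambda^{n+1}=1$ gives $\lambda=1$. Note that positivity matters here: $-\id$ would act as the antipodal map, which is not the identity. So $\ker\rho$ is trivial, and $\rho$ is injective.
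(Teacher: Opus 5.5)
Your proof is correct and follows essentially the same route as the paper. Smoothness comes from $gu\neq 0$ and the homomorphism property from direct computation. Injectivity comes from showing every nonzero vector is an eigenvector with a common positive eigenvalue, so $g=\lambda I$ with $\lambda^{n+1}=1$; you test on $e_i$, $e_j$, $e_i+e_j$ where the paper uses arbitrary linearly independent $u,w$. Orientation-preservation comes from connectedness of $\SL_{n+1}(\R)$. Your remark that $\rho(g^{-1})$ is a smooth inverse, and the Jacobian formula $\det g/\lVert gu\rVert^{n+1}$, are correct small additions that the paper leaves implicit.
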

\begin{proof}
 Given $g,h\in\SL_{n+1}(\R)$, it is straightforward to verify that $\rho(g)(\rho(h)(u))=\rho(gh)(u)$. The $C^\infty$-regularity of $\rho(g)$ follows from the fact that $gu\neq\vec{0}$ whenever $u\neq\vec{0}$. We now prove the faithfulness of the action. Suppose that $\rho(g)=\id_{\s^n}$ for some $g\in\SL_{n+1}(\R)$. Then, $gu=\lVert gu\rVert u$ for all $u\in\s^n$. Therefore, whenever $u,w\in\s^n$ are linearly independent, we have
 \[
 \lVert gu\rVert= \frac{\lVert g(u+w)\rVert}{\lVert u+w\rVert}=\lVert gw\rVert.
 \]
 Thus, every nonzero vector is an eigenvector with the same eigenvalue, and hence $g=\lambda I$ for some $\lambda>0.$ Since $\det(g)=\lambda^{
 n+1}=1,$ we obtain $\lambda=1$, and therefore $g=I$. Finally, since $\SL_{n+1}(\R)$ is connected, every $g\in \SL_{n+1}(\R)$ can be joined to the identity by a path. Applying $\rho$ to this path gives an isotopy between $\rho(g)$ and the identity. Hence, $\rho(g)$ is orientation-preserving. 
\end{proof}

Let $\pi \colon \Sn \to \RP^n$ be the quotient map identifying antipodal points. It is the universal two-sheeted covering of $\RP^n$. We denote by $\tilde{\rho} \colon \SL_{n+1}(\R) \to \Diff^\infty(\RP^n)$ the standard projective action, given by $\tilde{\rho}(\gamma)([v])=[\gamma v]$. The actions of $\rho$ and $\tilde{\rho}$ are related by the following commutative diagram  
\[
\begin{tikzcd}
\mathbb S^n \arrow[r, "\rho(\gamma)"] \arrow[d, "\pi"'] &
\mathbb S^n \arrow[d, "\pi"] \\
\mathbb{RP}^n \arrow[r, "\tilde{\rho}(\gamma)"'] &
\mathbb{RP}^n
,\end{tikzcd}
\]
that is,
\begin{equation}\label{Eq:projective-actions-commute}
\pi \circ \rho(\gamma) = \tilde{\rho}(\gamma) \circ \pi. 
\end{equation}
Unlike the standard projective action of $\SL_{n+1}(\R)$ on $\Sn$, the action $\tilde{\rho}$ is faithful only if $n$ is even; if $n$ is odd, then the kernel consists of $\pm I$.  

The relevance of the two actions $\rho$ and $\tilde\rho$ for our purposes comes from a recent rigidity theorem of Brown, Rodriguez Hertz, and Wang \cite[Theorem~1.1]{brown-rodriguez-wang}, which shows that, in this dimension, actions with infinite image are essentially conjugate to the standard projective action. We recall the precise statement below. 

\begin{theorem}[Brown, Rodriguez Hertz, and Wang]\label{Thm:BrownEtAl}
Let $n \geq 2$, let $\Lambda$ be a lattice in $\SL_{n+1}(\R)$, and let $M$ be a connected compact manifold with dimension $n$. Fix $\beta > 1$, and let $\sigma \colon \Lambda \to \Diff^\beta(M)$ have infinite image. Then:
\begin{enumerate}[label=(\roman*)]
    \item there is a $C^\beta$-diffeomorphism $h$ between $M$ and either $\mathbb{S}^{n}$ or $\RP^{n}$, 
    \item there is a subgroup $\Lambda' \leq \Lambda$ with
    \begin{enumerate}
        \item $\Lambda' = \Lambda$ if $M \simeq \RP^{n}$, or 
        \item $[\Lambda : \Lambda'] \leq 2$ if $M \simeq \mathbb{S}^{n}$, 
    \end{enumerate}
    such that 
    \item for every $x \in M$ and every $\gamma \in \Lambda'$, we have $h(\sigma(\gamma)(x)) = \varrho(\gamma)(h(x))$, where $\varrho$ denotes the corresponding standard projective action, namely $\rho$ on $\s^n$ or $\tilde\rho$ on $\RP^{n}$. 
\end{enumerate}
\end{theorem}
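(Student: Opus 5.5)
Since this statement is quoted from \cite{brown-rodriguez-wang}, we would not reprove it in this paper. If we had to, the plan would follow the standard Zimmer-program strategy. Note that it is not a self-contained argument: each step below rests on substantial external theory, and in particular the second and fourth steps are where the real work lies and are only outlined here.

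\textbf{Step 1: reduce to the case with no invariant measure.} Suppose first that $\sigma(\Lambda)$ preserves some Borel probability measure on $M$. Since $\dim M=n<n+1$, Zimmer's cocycle superrigidity applied to the derivative cocycle forces all Lyapunov exponents of the suspended action to vanish. The subexponential-growth and strong property (T) arguments of Brown--Fisher--Hurtado would then bound the growth of derivatives. This would show that the action factors through a finite group, contradicting the hypothesis that $\sigma$ has infinite image. Hence no $\Lambda$-invariant probability measure exists.

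\textbf{Step 2: extract a projective factor.} Form the induced $\SL_{n+1}(\R)$-space $X=(\SL_{n+1}(\R)\times M)/\Lambda$. Take a minimal parabolic (or a maximal split Cartan) subgroup and a measure on $X$ invariant under it. Using the entropy product structure, Ledrappier--Young theory and the invariance principle, one would show that the nonvanishing fiberwise exponents must coincide with roots of $\SL_{n+1}(\R)$. A dimension count then leaves only the configuration realized by the projective action. From this one extracts a measurable $\Lambda$-equivariant map between $M$ and $\SL_{n+1}(\R)/Q\cong\s^n$ (or $\RP^n$), where $Q$ is the stabilizer of a line or ray. I expect this step to be the main obstacle, as it is the heart of \cite{brown-rodriguez-wang}.

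\textbf{Step 3: pass to the finite-index subgroup.} The passage to $\Lambda'$ of index at most $2$ accounts for the sign ambiguity $\pm I$ on $\s^n$, which is invisible on $\RP^n$.

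\textbf{Step 4: improve regularity and conclude.} Upgrade the measurable conjugacy to a homeomorphism, using the contraction of the Cartan action along the unstable foliations together with Hölder estimates. Then promote it to $C^\beta$ via nonstationary normal forms or the local rigidity of Fisher--Margulis type. Finally, a homeomorphism $M\to\s^n$ or $M\to\RP^n$ identifies the manifold, giving (i), and the conjugacy relation (iii) holds by construction.
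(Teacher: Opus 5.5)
This is how the paper handles it too: the theorem is quoted from \cite[Theorem~1.1]{brown-rodriguez-wang} and used as a black box with no proof, so citing it is all that is required. If you keep the sketch, Step~1 needs fixing. A $\Lambda$-invariant measure on $M$ only gives vanishing fiber exponents for the induced $\SL_{n+1}(\R)$-invariant measure on the suspension. The Brown--Fisher--Hurtado subexponential-growth argument, by contrast, needs vanishing for every measure on the suspension that is invariant under the diagonal subgroup and projects to Haar measure. So the usable implication is the contrapositive one: infinite image forces such a measure with a nonzero fiber exponent, and it is this measure that the measure-rigidity step must turn into the projective factor.
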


We will apply this theorem to the lattice $\Lambda=\SL_{n+1}(\Z)$, both when $M=\Sn$ and when $M = \RP^n$. In the first case, the finite-index ambiguity in Theorem~\ref{Thm:BrownEtAl} disappears, and we obtain the following direct consequence. Indeed, $\s^n\not\simeq\RP^n$, and, since $\SL_{n+1}(\Z)$ is perfect for $n\ge2$ (a classical result of Nielsen \cite{Nielsen1924}, cf.\ also Steinberg \cite{Steinberg1968}) it has no subgroup of index 2.

\begin{corollary}\label{Cor:theusefulform}
Let $n \geq 2$ and $\beta>1$, and suppose that $\sigma \colon \SL_{n+1}(\Z) \to \Diff^\beta(\mathbb{S}^{n})$ has infinite image. Then $\sigma$ is $C^\beta$-conjugate to the standard projective action $\rho$. 
\end{corollary}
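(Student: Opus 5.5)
Corollary~\ref{Cor:theusefulform} follows by applying Theorem~\ref{Thm:BrownEtAl} with $\Lambda=\SL_{n+1}(\Z)$ and $M=\s^n$, and then showing that the two ambiguities in its conclusion disappear. First, $\SL_{n+1}(\Z)$ is a lattice in $\SL_{n+1}(\R)$ (the classical Borel--Harish-Chandra, or more elementarily Minkowski--Siegel, fact). Since $\sigma$ has infinite image and $\beta>1$, Theorem~\ref{Thm:BrownEtAl} provides a $C^\beta$-diffeomorphism $h$ from $\s^n$ onto either $\s^n$ or $\RP^n$, a subgroup $\Lambda'$, and the conjugacy relation on $\Lambda'$.

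Step one rules out the target $\RP^n$. The sphere $\s^n$ for $n\ge2$ is simply connected, while $\pi_1(\RP^n)\cong\Z/2$, so $\s^n$ is not even homeomorphic to $\RP^n$. Hence $h\colon\s^n\to\s^n$, and we are in case (b), so $[\SL_{n+1}(\Z):\Lambda']\le 2$.

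Step two shows $\Lambda'=\SL_{n+1}(\Z)$. Suppose instead that $\Lambda'$ has index exactly $2$. Then it is normal, and the quotient is $\Z/2$, giving a nontrivial homomorphism onto an abelian group. This contradicts the perfectness of $\SL_{n+1}(\Z)$ for $n\ge2$ (Nielsen, Steinberg).

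Finally, item (iii) gives $h\circ\sigma(\gamma)=\rho(\gamma)\circ h$ for all $\gamma\in\SL_{n+1}(\Z)$, that is, $\sigma(\gamma)=h^{-1}\rho(\gamma)h$, which is a $C^\beta$-conjugacy. There is no real obstacle here. The only points needing care are the lattice property of $\SL_{n+1}(\Z)$ and the observation that a subgroup of index $2$ yields a nontrivial abelian quotient.
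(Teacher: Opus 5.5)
Your proposal is correct and follows essentially the same route as the paper: you apply Theorem~\ref{Thm:BrownEtAl} with $\Lambda=\SL_{n+1}(\Z)$, rule out $\RP^n$ because $\s^n\not\simeq\RP^n$, and use perfectness of $\SL_{n+1}(\Z)$ to exclude an index-$2$ subgroup, so that $\Lambda'=\Lambda$. Your added details (the lattice property, and the $\pi_1$ argument distinguishing $\s^n$ from $\RP^n$) only make explicit what the paper leaves implicit.
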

In the case of $M = \RP^n$ rather than the sphere, we obtain the following result. 
\begin{corollary}\label{Cor:theusefulform-projectivespace}
Let $n \geq 2$ and $\beta>1$, and suppose that $\sigma \colon \SL_{n+1}(\Z) \to \Diff^\beta(\RP^n)$ has infinite image. Then $\sigma$ is $C^\beta$-conjugate to the standard projective action $\tilde\rho$. 
\end{corollary}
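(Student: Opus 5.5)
We apply Theorem~\ref{Thm:BrownEtAl} with $\Lambda=\SL_{n+1}(\Z)$ and $M=\RP^n$, and then pin down the conjugacy. Since $\sigma$ has infinite image, the theorem gives a $C^\beta$-diffeomorphism $h$ from $\RP^n$ to either $\s^n$ or $\RP^n$. The first option cannot occur, because $\RP^n$ and $\s^n$ are not even homeomorphic; for instance, $\pi_1(\RP^n)\cong\Z/2$ for $n\ge2$, whereas $\s^n$ is simply connected. Hence $h\colon\RP^n\to\RP^n$.

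We are then in case (ii)(a), so $\Lambda'=\Lambda=\SL_{n+1}(\Z)$ and no finite-index subgroup needs to be discarded. Item (iii) then reads
\[
h\circ\sigma(\gamma)=\tilde\rho(\gamma)\circ h \qquad \text{for all } \gamma\in\SL_{n+1}(\Z).
\]
Equivalently, $\sigma(\gamma)=h^{-1}\circ\tilde\rho(\gamma)\circ h$ for every $\gamma$. Here $h$ is a $C^\beta$-diffeomorphism of $\RP^n$, which is exactly the statement that $\sigma$ is $C^\beta$-conjugate to $\tilde\rho$.

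There is no real obstacle in this argument; it reduces entirely to Theorem~\ref{Thm:BrownEtAl}. Two points deserve a remark. First, we never need the perfectness of $\SL_{n+1}(\Z)$ here, unlike in Corollary~\ref{Cor:theusefulform}, because the theorem already gives $\Lambda'=\Lambda$ in the $\RP^n$ case. Second, when $n$ is odd, $\tilde\rho$ has kernel $\{\pm I\}$, so the conclusion forces $\sigma(-I)=\id$. This is consistent, since the conjugacy identity holds for every $\gamma$ and so constrains $\sigma$ itself; nothing further needs to be checked.
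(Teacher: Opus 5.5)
Your proposal is correct and follows the paper's own argument. You apply Theorem~\ref{Thm:BrownEtAl} with $M=\RP^n$, rule out the $\s^n$ alternative because $\RP^n\not\simeq\s^n$, and use item (ii)(a) to take $\Lambda'=\SL_{n+1}(\Z)$, so the conjugacy identity holds on all of $\SL_{n+1}(\Z)$ without any perfectness argument.
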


Note that this corollary requires no perfectness, since Theorem~\ref{Thm:BrownEtAl}(ii)(a) ensures that we can take all of $\Lambda = \SL_{n+1}(\Z)$ as our $\Lambda'$.

\subsection{Rigidity and map recognition for the standard lattice actions}
Fix $n\ge2$, and define the groups
\begin{equation}\label{eq: def Gamma and tilde Gamma}
\Gamma:=\rho(\SL_{n+1}(\Z)),\quad\tilde\Gamma:=\tilde\rho(\SL_{n+1}(\Z)).
\end{equation}
We now turn to the map-recognition properties that will be used in the proof of our main result. The precise statements are given in Proposition~\ref{Prop:Recognition-prop}. Their proof relies on an elementary lemma concerning the centralizers of finite-index subgroups of $\Gamma$ and $\tilde\Gamma$. For the statement, let $A$ denote the antipodal map on $\Sn$, i.e. the map $A(v) = -v$. The following lemma can also be obtained from Furman's alignment machinery~\cite{furman} by applying them to the projectivization $\R^{n+1}\setminus\{0\}\to\RP^n$, writing this map as the radial projection onto $\Sn$ followed by $\pi:\Sn\to\RP^n$, and using the uniqueness of maps $\Sn\to\RP^n$ compatible with the $H$-actions. Since there is also a direct, slightly longer, elementary proof, we include the latter for completeness.

\begin{lemma}\label{Lem:centralizer-of-P(SL)}
Let $n \geq 2$ and let $H$ be a finite index subgroup of $\SL_{n+1}(\Z)$. Then,
\begin{enumerate}
    \item\label{item: 1 lem centr} $\Centr_{\Diff^0(\Sn)}(\rho(H)) = \{ \id_{\s^n}, A \},$
    \item\label{item: 2 lem centr} $\Centr_{\Diff^0(\RP^n)}(\tilde\rho(H)) = \{\id_{\RP^n}\}.$
\end{enumerate}
\end{lemma}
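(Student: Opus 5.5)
We will prove both items by the same dynamical argument. The idea is to show that a homeomorphism commuting with $\rho(H)$ must preserve each line through the origin, and then to use connectedness. Let $f\in\Diff^0(\Sn)$ commute with $\rho(H)$.

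\textbf{Step 1: a rich supply of proximal elements.} We first find elements of $H$ with good attracting lines. Since $H$ has finite index in $\SL_{n+1}(\Z)$, some power $\gamma^k$ of any $\gamma\in\SL_{n+1}(\Z)$ lies in $H$. Take $\gamma\in\SL_{n+1}(\Z)$ that is diagonalizable over $\R$ with distinct positive real eigenvalues and a unique eigenvalue of maximal modulus; for example, a suitable hyperbolic companion matrix, with $\gamma^2$ used if needed to make the eigenvalues positive. Then $g=\gamma^{2k}\in H$ acts on $\Sn$ with exactly $2(n+1)$ fixed points, namely $\pm e_i$ for the eigenvectors $e_i$. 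Moreover, $\pm e_1$ (top eigenvalue) are the only attracting fixed points: every point outside the hyperplane sphere $\Sn\cap\langle e_2,\dots,e_{n+1}\rangle$ converges to $e_1$ or $-e_1$ under forward iteration.

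\textbf{Step 2: $f$ preserves the top eigendirections.} Because $f$ commutes with $g$, it permutes $\Fix(\rho(g))$. We claim it permutes the attracting fixed points among themselves. The attracting fixed points $\pm e_1$ are exactly the fixed points having an open neighbourhood whose forward orbits converge to them, and this property is topologically invariant under conjugation. Hence $f(\{e_1,-e_1\})=\{e_1,-e_1\}$.

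\textbf{Step 3: the direction set is dense.} The top eigenlines $[e_1]$ of such elements, as $\gamma$ ranges over $H$-conjugates $h\gamma h^{-1}$, form the $H$-orbit of a point of $\RP^n$. We need this orbit to be dense, and we get it by a standard argument. Zariski density of $H$ (Borel density) together with minimality of the lattice action on $\RP^n$ gives it. Alternatively, it follows from the fact that the orbit of $[v]$ under $\SL_{n+1}(\Z)$ is dense for any line $[v]$ not spanned by a rational vector, combined with finite index. This is the step I expect to require the most care: we must choose $\gamma$ so that its top eigenvector is irrational, which holds when the characteristic polynomial is irreducible, and then check that density passes to finite-index subgroups, which holds since finitely many $H$-orbits cover the $\SL_{n+1}(\Z)$-orbit.

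\textbf{Step 4: conclusion by continuity and connectedness.} Consider the set $D=\{u\in\Sn : f(u)\in\{u,-u\}\}$. It is closed and, by Steps 2 and 3, dense, so $f(u)=\pm u$ for all $u$. The sets $\{u: f(u)=u\}$ and $\{u:f(u)=-u\}$ are disjoint and closed, and they cover the connected space $\Sn$. Therefore $f=\id$ or $f=A$; both commute with $\rho(H)$, which proves (1).

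For (2), we run the same argument directly on $\RP^n$. The fixed point $[e_1]$ is the unique attracting fixed point of $\tilde\rho(g)$, so $f([e_1])=[e_1]$ for a dense set of lines. By continuity, $f=\id_{\RP^n}$.
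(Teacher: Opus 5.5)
Your argument is correct in outline, but it takes a genuinely different route from the paper. The paper uses unipotent elements. A power of any $\SL_{n+1}(\Z)$-conjugate of a transvection $T_{ij}$ lies in $H$, and its fixed set on $\Sn$ is exactly a rational great sphere. Intersecting $n$ of these shows $f(u)=\pm u$ for every $u$ on a rational line, and rational directions are trivially dense, so no orbit-density statement is ever needed. For $\RP^n$ the paper then lifts $f$ to $\Sn$, extracts a homomorphism $\varepsilon\colon H\to\Z/2\Z$, and applies (1) to its kernel.

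You instead use proximal elements and the topological characterization of attracting fixed points. This handles $\Sn$ and $\RP^n$ uniformly, with no lifting step, and it generalizes to any group with enough proximal elements. The cost is that all the difficulty moves into Step 3. The top eigenline of a proximal element of $\SL_{n+1}(\Z)$ is never rational, since a rational eigenline forces eigenvalue $\pm1$. So you really do need density of orbits of irrational lines, which the paper never needs.

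In Step 3, the finite-index justification does not work as written. Knowing that $\SL_{n+1}(\Z)\cdot x$ is a finite union of orbits $H\gamma_i x$ only says that some of their closures have nonempty interior, not that $H\cdot x$ itself is dense. You do not actually need that. For every $\delta\in\SL_{n+1}(\Z)$, some even power of $\delta\gamma\delta^{-1}$ lies in $H$ and has top eigenline $\delta x$. So your set of good directions already contains all of $\SL_{n+1}(\Z)\cdot x$.

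Density of that orbit has a two-line proof, and irrationality of $x$ plays no role in it. If $x=[v]$ with $v_j\neq 0$ and $i\neq j$, then $[T_{ij}^m v]\to[e_i]$ as $m\to\infty$. Hence the closed invariant set $\overline{\SL_{n+1}(\Z)\cdot x}$ contains $[e_i]$, hence every rational line, hence all of $\RP^n$. Citing minimality of lattice actions on $\RP^n$, applied to $H$ itself, also works. Note that the elementary repair uses exactly the transvections of the paper's proof, at which point the paper's direct fixed-set argument is the shorter path.
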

\begin{proof}
We begin with the proof of \eqref{item: 1 lem centr}. For ease of notation, we consider $\Sn$ as a subspace of $\R^{n+1}$, and accordingly use the language of rational hyperplanes, lines, and great spheres. First, for $i \neq j$, let $T_{ij} = I + E_{ij}$. For $k \geq 1$, we have $T_{ij}^k = I + kE_{ij}$. It is easy to see that the fixed-point set of $\rho (T_{ij}^k)$ on $\Sn$ is $ \Sn \cap \{ x_j = 0 \}$. Furthermore, every rational hyperplane $\mathcal{H}$ is obtained as the $\SL_{n+1}(\Z)$-translate of one of the hyperplanes $ \Sn \cap \{ x_j = 0 \}$. Hence, it follows that there exists some $h \in H$ such that $\Fix(\rho(h)) = \mathcal{H} \cap \Sn$. Every rational line $L\subset\R^{n+1} $ can be written as the intersection of $n$ rational hyperplanes, say $\mathcal{H}_1,\ldots,\mathcal{H}_n$. By the preceding argument, for each $k=1,\ldots,n$, there exists $h_k\in H$ such that $\Fix(\rho(h_k))=\mathcal{H}_k\cap\s^n$. Therefore, 
\[
L\cap \s^n=\bigcap_{k=1}^n\Fix(\rho(h_k)).
\]

Now let $f \in \text{Centr}_{\Diff^0(\Sn)}(\rho(H))$. Since $f$ commutes with $\rho(h_k)$, it preserves $\Fix(\rho(h_k))$ for every $k$. Hence, $f$ preserves $L\cap\s^n$ for every rational line. In particular, if $u\in \s^n$ lies in a rational line, then $f(u) \in \{ u, -u \}$. But unit vectors lying on rational lines are dense in $\Sn$, and the sign function $u \mapsto \langle f(u), u \rangle \in \{ -1, 1 \}$ is continuous on $\Sn$, so it is constant. Therefore, $f$ is either $\id_{\s^n}$ or the antipodal map $A$.

We now prove \eqref{item: 2 lem centr}. Let $f \in \Centr_{\Diff^0(\RP^n)}(\tilde\rho(H))$, so that $f \tilde\rho(\gamma) f^{-1} = \tilde\rho(\gamma)$ for all $\gamma \in H$. We first lift $f$ to $\Sn$. Consider $f \circ \pi \colon \Sn \to \RP^n$. Because $\Sn$ is the universal covering space of $\RP^n$ with deck transformation group $\{ \id_{\s^n} , A\}$, where $A$ is the antipodal map, we can lift $f$ to a map $F \colon \Sn \to \Sn$ such that $\pi \circ F = f \circ \pi$ which is a homeomorphism since $f$ is a homeomorphism. There are two such lifts: we fix one and denote it by $F$, so that the other is $A \circ F$. 

Recall that the projective actions $\rho, \tilde\rho$ of $\SL_{n+1}(\Z)$ on $\Sn$ resp.\ $\RP^n$ satisfy the relation \eqref{Eq:projective-actions-commute}. We now claim that for every $\gamma \in H$, we have that $F \rho(\gamma) F^{-1}$ and $\rho(\gamma)$ are lifts of the same map $\Sn \to \RP^n$. Indeed, we have 
\begin{align*}
    \pi \circ  F \rho(\gamma) F^{-1} = f \tilde\rho(\gamma) f^{-1} \circ \pi= \tilde\rho(\gamma) \circ \pi= \pi \circ \rho(\gamma),
\end{align*}
where we have used \eqref{Eq:projective-actions-commute}, the universal property $\pi \circ F = f \circ \pi$, and $f \tilde\rho(\gamma) f^{-1} = \tilde\rho(\gamma)$. Hence the two maps are lifts of the same map $\tilde\rho(\gamma) \circ \pi \colon \Sn \to \RP^n$. Since $\Sn$ is connected, two lifts of the same map differ by a unique deck transformation, and hence for each $\gamma \in H$ there exists a unique $\varepsilon(\gamma) \in \Z / 2\Z$ such that $F \rho(\gamma) F^{-1} = A^{\varepsilon(\gamma)} \rho(\gamma)$. 

The map $\varepsilon \colon H \to \Z / 2\Z$ is a homomorphism since $A$ commutes with every element of $\rho(H)$. Let $H_0 = \ker(\varepsilon)$. Then $H_0$ has finite index in $H$, and hence also in $\SL_{n+1}(\Z)$. Since now $F \in \Centr_{\Diff^0(\Sn)}(\rho(H_0))$, we can apply item \eqref{item: 1 lem centr} to conclude that $F \in \{\id_{\s^n} , A \}$. Both $\id_{\s^n}$ and $A$ project to the identity map on $\RP^n$, since $\pi(x) = \pi(-x)$, so that $\pi \circ F = \pi$, and hence $f \circ \pi = \pi$. Since $\pi \colon \Sn \to \RP^n$ is surjective, it follows that $f = \id_{\RP^n}$, completing the proof of the lemma.
\end{proof}
 \begin{proposition}[Rigidity and map recognition]\label{Prop:Recognition-prop}
Let $\Gamma\le\Diff_+^\infty(\s^n)$, $\tilde\Gamma\le\Diff^\infty(\RP^n)$ be the groups defined in \eqref{eq: def Gamma and tilde Gamma}. Then the following hold:
\begin{enumerate}
    \item\label{item: 1 prop rig-map r} For every $\beta>1$, every embedding of $\Gamma\to\Diff^\beta(\s^n)$ or $\tilde\Gamma\to \Diff^\beta(\RP^n)$ is induced by conjugation by a $C^\beta$-diffeomorphism.
    \item\label{item: 2 prop rig-map r} Let $R \in \rho(\SL_{n+1}(\Q))$, and let $h \in \Diff^0(\Sn)$. If there exists a homomorphism $\varphi \colon \langle \Gamma, R \rangle \to \langle \Gamma, h \rangle$ such that $\varphi |_\Gamma = \id_\Gamma$ and $\varphi(R) = h$, then $R^2  = h^2$.
    \item\label{item: 3 prop rig-map r-projective} Let $R \in \tilde\rho(\SL_{n+1}(\Q))$, and let $h \in \Diff^0(\RP^n)$. If there exists a homomorphism $\varphi \colon \langle \tilde\Gamma, R \rangle \to \langle \tilde\Gamma, h \rangle$ such that $\varphi |_{\tilde\Gamma} = \id_{\tilde\Gamma}$ and $\varphi(R) = h$, then $R  = h$.
\end{enumerate}
\end{proposition}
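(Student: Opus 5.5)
Item \eqref{item: 1 prop rig-map r} follows from the corollaries. An embedding $\sigma\colon\Gamma\to\Diff^\beta(\s^n)$ precomposed with $\rho$ gives a faithful homomorphism $\sigma\circ\rho\colon\SL_{n+1}(\Z)\to\Diff^\beta(\s^n)$. Its image is infinite because $\SL_{n+1}(\Z)$ is infinite. Corollary~\ref{Cor:theusefulform} then yields a $C^\beta$-diffeomorphism $g$ with $g\,\sigma(\rho(\gamma))\,g^{-1}=\rho(\gamma)$ for all $\gamma$, which is exactly the claim. The $\RP^n$ case is identical, using Corollary~\ref{Cor:theusefulform-projectivespace}; there $\tilde\Gamma\cong\SL_{n+1}(\Z)/\ker\tilde\rho$ is still infinite. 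When $n$ is odd, $\sigma\circ\tilde\rho$ factors through $\tilde\rho$, and the conjugacy to $\tilde\rho$ is still what we want.

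For items \eqref{item: 2 prop rig-map r} and \eqref{item: 3 prop rig-map r-projective}, write $R=\rho(q)$ with $q\in\SL_{n+1}(\Q)$. The key step is to find a finite index subgroup $H\le\SL_{n+1}(\Z)$ with $qHq^{-1}\subseteq\SL_{n+1}(\Z)$. Let $N$ be a common denominator for the entries of $q$ and $q^{-1}$, and take $H$ to be the principal congruence subgroup of level $N^2$. For $\gamma=I+N^2X\in H$ we get $q\gamma q^{-1}=I+(Nq)X(Nq^{-1})$, which is integral. Hence for each $\gamma\in H$ we have $R\rho(\gamma)R^{-1}=\rho(\gamma')$ with $\gamma'=q\gamma q^{-1}\in\SL_{n+1}(\Z)$, and this is a relation inside $\langle\Gamma,R\rangle$. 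Applying $\varphi$, and using that $\varphi$ fixes $\Gamma$ pointwise, gives $h\rho(\gamma)h^{-1}=\rho(\gamma')=R\rho(\gamma)R^{-1}$. So $R^{-1}h$ centralizes $\rho(H)$. Since $R^{-1}h$ is a homeomorphism, Lemma~\ref{Lem:centralizer-of-P(SL)}\eqref{item: 1 lem centr} gives $h=R$ or $h=RA$.

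Since $A=\rho(-I)$ commutes with all of $\rho(\SL_{n+1}(\R))$, in both cases $h^2=R^2A^{2\epsilon}=R^2$. This explains why item \eqref{item: 2 prop rig-map r} only concludes $R^2=h^2$.

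On $\RP^n$ the same argument applies with $\tilde\rho$: $R^{-1}h$ centralizes $\tilde\rho(H)$, so Lemma~\ref{Lem:centralizer-of-P(SL)}\eqref{item: 2 lem centr} forces $h=R$.

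The only real obstacle is the commensuration step, which needs $qHq^{-1}\le\SL_{n+1}(\Z)$ for a finite index $H$. The congruence subgroup handles it: it has finite index as the kernel of reduction mod $N^2$. The statement asks only for a homomorphism $\varphi$, not an injective one, and none of the steps above uses injectivity.
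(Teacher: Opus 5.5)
Your proposal is correct and follows essentially the same route as the paper. Item (1) is read off from the two corollaries, and items (2) and (3) come from conjugating a finite-index subgroup (the level-$N^2$ principal congruence subgroup) into $\SL_{n+1}(\Z)$ by $q$ and then applying Lemma~\ref{Lem:centralizer-of-P(SL)} to $R^{-1}h$. One small slip: writing $A=\rho(-I)$ only makes sense when $n$ is odd, since $\det(-I)=(-1)^{n+1}$. The fact you need still holds for every $n$, because $\rho(g)(-u)=-\rho(g)(u)$ by linearity of $g$, so $A$ commutes with $R$.
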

\begin{proof}
The proof of \eqref{item: 1 prop rig-map r} is a straightforward consequence of Corollaries~\ref{Cor:theusefulform} and~\ref{Cor:theusefulform-projectivespace}, and is left to the reader. We begin by proving~\eqref{item: 2 prop rig-map r}. Recall the following fact about commensurators inside $\SL_{n+1}(\Z)$: if $M \in \SL_{n+1}(\Q)$, then the group $H:=\SL_{n+1}(\Z) \cap M^{-1} \SL_{n+1}(\Z) M$ has finite index in $\SL_{n+1}(\Z)$. Indeed, if $d \in \N_{\geq 1}$ is such that $dM$ and $dM^{-1}$ have integral entries, then $H$ is easily seen to contain the principal congruence subgroup $\{S\in \SL_{n+1}(\Z): S\equiv I\pmod{d^2}\}$, and hence has finite index in $\SL_{n+1}(\Z)$. 

Let now $\Gamma_R := \Gamma \cap R^{-1} \Gamma R$. Then by the aforementioned commensurator fact, and by our assumptions on $R$, we have that $\Gamma_R$ has finite index in $\Gamma$. For $\gamma \in \Gamma_R$, we have $R \gamma R^{-1} \in \Gamma$. If we apply $\varphi$ to this, we get $h \gamma h^{-1} = R\gamma R^{-1}$, because $\varphi$ fixes $\Gamma$ by assumption. Consequently $(R^{-1}h)\gamma(R^{-1}h)^{-1} = \gamma$. Therefore, $R^{-1}h \in \Centr_{\Diff^0(\Sn)}(\Gamma_R)$. But by Lemma~\ref{Lem:centralizer-of-P(SL)}, the centralizer of any finite index subgroup of $\Gamma$ is $\{ \id_{\s^n}, A\}$, where $A$ is the antipodal map. Hence $R^{-1}h \in \{ \id_{\s^n}, A\}$, so that either $h = R$ or else $h = RA$. Since $A$ commutes with $R$, and since $A^2 = \id_{\s^n}$, we have $R^2=h^2$ in both cases, as required. 

We now prove \eqref{item: 3 prop rig-map r-projective}. Choose $M \in \SL_{n+1}(\Q)$ such that $\tilde\rho(M)=R$, and put $H = \SL_{n+1}(\Z) \cap M^{-1} \SL_{n+1}(\Z)M$. Then exactly as in the previous item the group $H$ has finite index in $\SL_{n+1}(\Z)$. Furthermore, for $\gamma \in H$, we have $R \tilde\rho(\gamma) R^{-1} \in \tilde\Gamma$. Applying $\varphi$, we obtain $h \tilde\rho(\gamma) h^{-1} = R \tilde\rho(\gamma)R^{-1}$. Hence $R^{-1} h$ centralizes $\tilde \rho(H)$. But by Lemma~\ref{Lem:centralizer-of-P(SL)}, the centralizer of $\tilde\rho(H)$ is trivial. Hence $R = h$, as desired. 
\end{proof}

\section{Constructing the groups}\label{sec:Constructing the groups}

\noindent Fix $n\ge2$ and $\alpha \geq1$. Let $\rho:\SL_{n+1}(\R)\to\Diff_+^\infty(\s^n)$ and $\tilde\rho:\SL_{n+1}(\R)\to\Diff^\infty(\RP^n)$ be the standard actions introduced in \eqref{eq:representation of SL(Z)} and \eqref{Eq:projective-actions-commute}, respectively. Let $\Gamma$ and $\tilde\Gamma$ denote the corresponding lattice images of $\SL_{n+1}(\Z)$ defined in \eqref{eq: def Gamma and tilde Gamma}. We now construct the groups that will realize the prescribed critical regularity on $\s^n$ and $\RP^n$. The two constructions are closely related and are obtained by adjoining suitable elements to $\Gamma$ and $\tilde\Gamma$, respectively.

\subsection{Groups on the sphere}\label{Subsec:sphere group of crit reg}
We start by enlarging $\Gamma$ as follows. Define 
\begin{equation}\label{eq: def of subg K isom SO(n)}
K:=\rho\left\lbrace \begin{pmatrix}
    B & \vec{0}^t\\
    \vec{0} & 1
\end{pmatrix} : B\in \SO(n) \right\rbrace\simeq \SO(n).
\end{equation}
As part of the enlargement of $\Gamma$, we choose elements $R_1,\ldots,R_{n-1}\in K$ arising from matrices in $\SL_{n+1}(\Q)$ such that the group generated by $R_1^2,\ldots,R_{n-1}^2$ is dense in $K$. The following elementary lemma records that this choice can indeed be made.
\begin{lemma}\label{Lem:exists-dense-rotations}
    There exist elements $R_1,\ldots,R_{n-1}\in K$, arising from matrices in $\SL_{n+1}(\Q)$, such that, for all $m\ge 1$, the subgroup $\langle R_1^m,\ldots,R_{n-1}^m \rangle$ is dense in $K$.
\end{lemma}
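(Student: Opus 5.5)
We need $n-1$ rational rotations in $\SO(n)$ such that, for every $m\ge1$, their $m$-th powers generate a dense subgroup. The plan is to use rational rotations of infinite order in coordinate planes, which are plentiful via Pythagorean triples. Let $\theta$ be the angle with $\cos\theta=3/5$, $\sin\theta=4/5$. Since $e^{i\theta}=(3+4i)/5$ is not a root of unity ($3+4i$ is not a unit times a power of $1+i$ in $\Z[i]$, or by Niven's theorem), $\theta/\pi$ is irrational. For $j=1,\ldots,n-1$, let $B_j\in\SO(n)\cap\SL_n(\Q)$ be the rotation by $\theta$ in the coordinate plane spanned by $e_j,e_{j+1}$. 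Set $R_j:=\rho\begin{pmatrix}B_j&\vec0^t\\ \vec0&1\end{pmatrix}$; these lie in $K$ and arise from matrices in $\SL_{n+1}(\Q)$. Since $\rho$ restricted to the block embedding of $\SO(n)$ is a continuous injective homomorphism from a compact group, it is a topological isomorphism onto $K$. Hence it suffices to show that $\langle B_1^m,\ldots,B_{n-1}^m\rangle$ is dense in $\SO(n)$.

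Fix $m\ge1$. The rotation angle $m\theta$ is still an irrational multiple of $\pi$, so the closure of $\langle B_j^m\rangle$ is the full circle subgroup $S_j\cong\SO(2)$ of rotations in the $(e_j,e_{j+1})$-plane. Let $C$ be the closure of $\langle B_1^m,\ldots,B_{n-1}^m\rangle$. Then $C$ is a closed subgroup of $\SO(n)$, hence a Lie group, and it contains $S_1,\ldots,S_{n-1}$. Its identity component $C^0$ is therefore a connected closed subgroup whose Lie algebra contains the elements $E_{j,j+1}-E_{j+1,j}$ for $j=1,\ldots,n-1$.

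The key computation is that these elements generate $\mathfrak{so}(n)$ as a Lie algebra. Writing $L_{ab}=E_{ab}-E_{ba}$, we have $[L_{ab},L_{bc}]=\pm L_{ac}$ for distinct $a,b,c$. By induction on $|a-b|$, bracketing adjacent generators then produces every $L_{ab}$, and these span $\mathfrak{so}(n)$. Consequently $C^0$ has Lie algebra $\mathfrak{so}(n)$, so $C^0=\SO(n)$ by connectedness, and therefore $C=\SO(n)$. For $n=2$ the argument reduces to the single irrational rotation $B_1^m$ being dense in $\SO(2)$.

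The only steps requiring care are the irrationality of $\theta/\pi$ and the Lie algebra generation, and both are standard. I would state the irrationality via the Gaussian integer argument: if $\left((3+4i)/5\right)^k=1$, then $(3+4i)^k=(3-4i)^k$ up to the relevant powers of $5$, which is impossible since $2+i$ and $2-i$ are non-associate primes.
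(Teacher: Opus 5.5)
Your argument is correct, and the construction matches the paper's: the same angle $\theta$ with $\cos\theta=3/5$ and $\sin\theta=4/5$, rotations by $\theta$ in the adjacent coordinate planes, and density of $\langle B_j^m\rangle$ in the circle subgroup $S_j$.

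Where you diverge is the final step, showing that the closure $C$ is all of $\SO(n)$.

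\begin{itemize}
\item The paper argues directly. Every element of $\SO(n)$ is a finite product of rotations in adjacent coordinate planes (a Givens/Euler-angle type decomposition). Approximating each factor by a power of the corresponding $r_i^m$ and using continuity of multiplication then gives density.
\item You instead use the closed subgroup theorem and the Lie correspondence. This reduces everything to the bracket identity $[L_{ab},L_{bc}]=L_{ac}$, which shows that the $L_{j,j+1}$ generate $\mathfrak{so}(n)$.
\end{itemize}

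The paper's route avoids Lie theory altogether, and it shows that the circles $S_j$ generate $\SO(n)$ even as an abstract group. However, it quotes the decomposition without proof. Your route replaces that input with a one-line computation. It also applies unchanged in any connected Lie group: a closed subgroup containing one-parameter subgroups whose generators Lie-generate the Lie algebra is the whole group.

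You also supply two details the paper leaves to the reader: the irrationality of $\theta/\pi$, and the transfer of density from $\SO(n)$ to $K$ via continuity of $\rho$. Your Gaussian-integer phrasing can be tightened. The equation $\left((3+4i)/5\right)^k=1$ gives $(3+4i)^k=5^k=(3-4i)^k$ exactly, with no adjustment by powers of $5$ needed. This is $(2+i)^{2k}=(2-i)^{2k}$, which unique factorization in $\Z[i]$ rules out since $2+i$ and $2-i$ are non-associate primes.
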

\begin{proof}
    Choose an angle $\theta$ such that $\tan\theta=4/3$. For $i=1,\ldots,n-1$, let $r_i\in\SO(n)$ be the rotation by $\theta$ in the plane spanned by $e_i,e_{i+1}$, acting trivially on its orthogonal complement. Since $\cos\theta=3/5$ and $\sin\theta=4/5$, each $r_i$ has rational coefficients. It is easy to verify that $\theta/2\pi\notin\Q$. Consequently, the powers of $r_i^m$ are dense in the circle subgroup of rotations in this plane.

    Since every element of $\SO(n)$ can be written as a product of rotations in the adjacent coordinate planes, approximating each factor by a power of the corresponding $r_i^m$ shows that $\overline{\langle r_1^m,\ldots,r_{n-1}^m\rangle}=\SO(n)$. Taking $R_i\in K$ to be the image under $\rho$ of 
    diag$(r_i,1)$ gives the result.
    \end{proof}

In addition, we adjoin one further subgroup. Let $H_\alpha\le\Diff_c^\alpha(-1,1)$ be the group provided by Theorem~\ref{thm: kim-koberda}, and consider its image, say $\hat{H}_\alpha$, under the embedding  
\begin{equation}\label{eq: embedding of H_alpha}
H_\alpha\to\Diff^0_+(\s^n),\quad g\mapsto\hat{g},
\end{equation}
obtained by extending the map
\begin{equation}\label{eq: hat g}
\hat{g}(v,t):=\left(\sqrt{\frac{1-g(t)^2}{1-t^2}}\, v,g(t)\right) 
\end{equation}
to the whole sphere $\s^n=\{(v,t)\in \R^n\times\R:\Vert v\Vert^2+|t|^2=1\}$ (see Figure~\ref{fig:action}). 

\begin{figure}[h]
    \centering
    \begin{tikzpicture}[scale=0.8,
  line cap=round,
  line join=round,
  >=Latex,
  font=\small,
  sphere fill/.style={fill=black!8},
  sphere outline/.style={line width=.78pt},
  latitude front/.style={line width=.65pt},
  latitude back/.style={line width=.48pt,dashed,dash pattern=on 3pt off 2.6pt},
  fixed meridian/.style={line width=.82pt},
  action arrow/.style={-{Latex[length=2.1mm,width=1.25mm]},line width=.82pt},
  interval/.style={line width=.68pt},
  guide/.style={line width=.42pt,densely dotted},
  clean label/.style={fill=white,inner sep=1pt}
]

\def\R{2.85}   
\def\flat{0.19}
\def\yt{1.30}  
\def\yq{0.45}  
\def\M{1.02}   
\def\Ix{-4.55} 
\def\Ih{2.85}  

\pgfmathsetmacro{\at}{sqrt(\R*\R-\yt*\yt)}
\pgfmathsetmacro{\bt}{\flat*\at}
\pgfmathsetmacro{\aq}{sqrt(\R*\R-\yq*\yq)}
\pgfmathsetmacro{\bq}{\flat*\aq}
\pgfmathsetmacro{\be}{\flat*\R}


\draw[guide] (\Ix,\yt) -- (-\at,\yt);
\draw[guide] (\Ix,\yq) -- (-\aq,\yq);
\draw[guide] (\Ix,-\Ih) -- (0,-\Ih);
\draw[guide] (\Ix,\Ih) -- (0,\Ih);

\draw[interval] (\Ix,-\Ih) -- (\Ix,\Ih);
\draw[interval,fill=white] (\Ix,\Ih) circle[radius=.073];
\draw[interval,fill=white] (\Ix,-\Ih) circle[radius=.073];
\node[above=4pt] at (\Ix,\Ih) {$1$};
\node[below=4pt] at (\Ix,-\Ih) {$-1$};

\fill (\Ix,\yt) circle[radius=.068];
\fill (\Ix,\yq) circle[radius=.068];
\node[left=7pt] at (\Ix,\yt) {$t$};
\node[left=7pt] at (\Ix,\yq) {$g(t)$};
\draw[action arrow] (\Ix,{\yt-0.28}) -- (\Ix,{\yq+0.28}) node[midway,right=5pt] {$g$};

\fill[sphere fill] (0,0) circle[radius=\R];
\draw[sphere outline] (0,0) circle[radius=\R];

\draw[latitude back] (0,\yt) ++(0:\at)
  arc[start angle=0,end angle=180,x radius=\at,y radius=\bt];
\draw[latitude back] (0,\yq) ++(0:\aq)
  arc[start angle=0,end angle=180,x radius=\aq,y radius=\bq];

\path[name path=latitude-t]
  (0,\yt) ++(180:\at)
  arc[start angle=180,end angle=360,x radius=\at,y radius=\bt];
\path[name path=latitude-q]
  (0,\yq) ++(180:\aq)
  arc[start angle=180,end angle=360,x radius=\aq,y radius=\bq];

\draw[latitude front] (0,\yt) ++(180:\at)
  arc[start angle=180,end angle=360,x radius=\at,y radius=\bt];
\draw[latitude front] (0,\yq) ++(180:\aq)
  arc[start angle=180,end angle=360,x radius=\aq,y radius=\bq];

\path[name path=fixed-geodesic]
  plot[domain=-90:90,samples=180] ({\M*cos(\x)},{\R*sin(\x)});
\draw[fixed meridian]
  plot[domain=-90:90,samples=180] ({\M*cos(\x)},{\R*sin(\x)});

\path[name intersections={of=fixed-geodesic and latitude-t,by=Xt}];
\path[name intersections={of=fixed-geodesic and latitude-q,by=Xq}];

\draw[action arrow] (Xt) to[bend left=3] (Xq);
\fill (Xt) circle[radius=.068];
\fill (Xq) circle[radius=.068];
\node[above right=1pt] at (Xt) {$(v,t)$};
\node[left=3pt] at ($(Xt)!0.52!(Xq)$) {$\hat g$};

\coordinate (FormulaAnchor) at (3.45,-.50);
\draw[guide] (Xq) -- ($(FormulaAnchor)+(-.15,0)$);
\node[anchor=west,clean label] at (FormulaAnchor)
  {$\displaystyle
    \hat g(v,t)=
    \left(\sqrt{\frac{1-g(t)^2}{1-t^2}}\,v,\,g(t)\right)$};

\draw[sphere outline,fill=white]
    (0,\R) circle[radius=.073];
\draw[sphere outline,fill=white]
    (0,-\R) circle[radius=.073];


\end{tikzpicture}
    \caption{The extension $g \mapsto \hat{g}$ for the Kim-Koberda group $H_\alpha \leq \Diff_c^\alpha(-1,1)$ to the sphere $\Sn$.}
    \label{fig:action}
\end{figure}

The following lemma shows that $\hat{H}_\alpha$ is an embedded copy of $H_\alpha$ in $\Diff_+^\alpha(\s^n)$.

\begin{lemma}\label{lem: hat H_alpha}
The map $H_\alpha\to\Diff^0_+(\s^n),\quad g\mapsto\hat{g},$ defined by \eqref{eq: hat g} is an injective homomorphism. Moreover, for every $g\in H_\alpha$, the homeomorphism $\hat{g}$ belongs to $\Diff_+^\alpha(\s^n)$.
\end{lemma}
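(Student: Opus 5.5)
The plan has three parts: the homomorphism property, injectivity and orientation, and regularity. The first two are formal; the regularity is the real content.

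\textbf{Homomorphism, injectivity, orientation.} Write $\lambda_g(t):=\sqrt{(1-g(t)^2)/(1-t^2)}$ for $t\in(-1,1)$. Since $g$ is compactly supported in $(-1,1)$, we have $\lambda_g\equiv1$ near $t=\pm1$. Hence $\hat g$ is the identity near the poles, and the extension to the poles is just $\hat g(0,\pm1)=(0,\pm1)$.

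The identity $\hat g\circ\hat h=\widehat{gh}$ then follows from the telescoping relation
\[
\lambda_g(h(t))\,\lambda_h(t)=\lambda_{gh}(t),
\]
which can be checked directly. One also checks that $\hat g$ maps $\s^n$ to itself, because $\lambda_g(t)^2\|v\|^2+g(t)^2=1-g(t)^2+g(t)^2=1$ when $\|v\|^2=1-t^2$. Since $\hat{\id}=\id$, each $\hat g$ is a bijection with inverse $\widehat{g^{-1}}$, and so a homeomorphism.

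For injectivity, note that the last coordinate of $\hat g(v,t)$ is $g(t)$; thus $\hat g=\id$ forces $g=\id$. For orientation, $g$ is isotopic to the identity through compactly supported homeomorphisms, for instance via $g_s(t)=(1-s)t+s\,g(t)$, which stay monotone. This gives an isotopy $\hat g_s$ from $\hat g$ to the identity, so $\hat g$ preserves orientation.

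\textbf{Regularity.} Choose $[-c,c]\subset(-1,1)$ containing $\operatorname{supp} g$. Then $\hat g$ is the identity on the open polar caps $\{|t|>c\}$, which are smooth there. On the band $\{|t|<c'\}$ with $c<c'<1$, the formula \eqref{eq: hat g} is the restriction to $\s^n$ of the map
\[
F(v,t)=(\lambda_g(t)\,v,\,g(t))
\]
on an open subset of $\R^{n+1}$. Here $\lambda_g=\Phi\circ(t,g(t))$ with $\Phi(t,s)=\sqrt{(1-s^2)/(1-t^2)}$ smooth on $(-1,1)^2$.

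The key point is that a $C^\infty$ function composed with a $C^\alpha$ function, with $\alpha\ge1$, is $C^\alpha$, Hölder part included. Indeed, derivatives of order $\lfloor\alpha\rfloor$ of the composition are polynomials in derivatives of $g$ of order at most $\lfloor\alpha\rfloor$, multiplied by smooth functions of $g$. Products and compositions of Hölder functions on compact sets remain Hölder. So $F$ is $C^\alpha$ on a neighbourhood of the band, and its restriction to the submanifold $\s^n$ is $C^\alpha$.

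The same applies to $\hat g^{-1}=\widehat{g^{-1}}$, since $g^{-1}\in\Diff_c^\alpha(-1,1)$. Hence $\hat g\in\Diff_+^\alpha(\s^n)$.

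\textbf{Main obstacle.} The only delicate step is the non-integer case, namely verifying that the Hölder modulus of the top derivative survives the composition with $\Phi$ and the multiplication by $v$. I would handle it with the Faà di Bruno formula and the elementary facts that Hölder-continuous functions on compact sets form an algebra and are stable under composition with Lipschitz maps. The poles cause no difficulty because of the compact support.
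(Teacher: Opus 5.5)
Your proof is correct and follows essentially the same route as the paper. The homomorphism property comes from direct computation and injectivity from the last coordinate. Regularity comes from two facts: $\hat g$ is the identity near the poles by compact support, and on a band away from the poles it is built from the positive $C^\alpha$ function $(1-g(t)^2)/(1-t^2)$ and its square root. You also spell out details the paper leaves implicit: the Hölder bookkeeping, the $C^\alpha$-regularity of $\hat g^{-1}=\widehat{g^{-1}}$, and orientation preservation via an isotopy.
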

\begin{proof}
    A straightforward application of the formula \eqref{eq: hat g} yields
    \[
    \hat{f}(\hat{g}(v,t))=\widehat{fg}(v,t),
    \]
    so the map $H_\alpha\to\Diff^0_+(\s^n)$, $g\to\hat{g}$, is a homomorphism. Injectivity follows by comparing the last coordinates: indeed the last coordinate of $\hat{g}(v,t)$ is $g(t)$, and hence $\hat{g}$ uniquely determines~$g$.

    We now verify that every element $\hat{g}\in\hat{H}_\alpha$ is of class $C^\alpha$. Since the projection of $\hat{g}$ onto the second coordinate is $g$, it is enough to check that the extension to the poles of the map 
    \begin{equation}\label{eq: projection 1st coordinate hat g}
    (v,t)\mapsto \sqrt{\frac{1-g(t)^2}{1-t^2}}\, v
    \end{equation}
    is of class $C^\alpha$. Since every $g\in H_\alpha$ is compactly supported in $(-1,1)$, there exists $\varepsilon>0$ such that $g(t)=t$ for every $|t|>1-\varepsilon$ and every $g\in H_{\alpha}$. Thus, $\hat{g}$ agrees with the identity on a neighbourhood of the poles (see Figure~\ref{Fig:maps-g-compactly-supported}). It therefore remains to verify that, for every $g\in H_{\alpha}$, the map \eqref{eq: projection 1st coordinate hat g} is of class $C^\alpha$ on the region $|t|<1-\varepsilon/2$. On this region the function $t\mapsto(1-g(t)^2)/(1-t^2)$ is of class $C^\alpha$, positive, and bounded away from zero. Therefore, its square root is also of class $C^\alpha$, and hence so is \eqref{eq: projection 1st coordinate hat g}. This finishes the proof of the lemma.
\end{proof}

\begin{figure}[ht]
\centering
\begin{tikzpicture}[x=1.15cm,y=1.15cm,line cap=round,line join=round,
  endpoint/.style={
    circle,
    draw=black,
    fill=white,
    line width=.72pt,
    inner sep=1.35pt
  }
]
  \def\a{1.25}
  \def\b{8.75}
  \def\A{0.95}

  \draw[line width=0.55pt] (0,0) -- (\a,0);
  \draw[line width=0.55pt]
    plot[samples=180,domain=\a:\b]
      (\x,{\A*(sin(180*(\x-\a)/(\b-\a)))^2});
  \draw[line width=0.55pt] (\b,0) -- (10,0);

  \foreach \x in {\a,\b}
    \draw[line width=0.55pt] (\x,-0.13) -- (\x,0.13);

  \node[endpoint] at (0,0) {};
  \node[endpoint] at (10,0) {};

  \node[font=\small,below=5pt] at (0,0) {$-1$};
  \node[font=\small,below=5pt] at (\a,0) {$\varepsilon-1$};
  \node[font=\small,below=5pt] at (\b,0) {$1-\varepsilon$};
  \node[font=\small,below=5pt] at (10,0) {$1$};
  \node[font=\small,above=7pt] at (5,\A) {$g$};

\end{tikzpicture}
\caption{The maps $g\in H_\alpha$ compactly supported on $(-1,1)$.}
\label{Fig:maps-g-compactly-supported}
\end{figure}

We then define 
\begin{equation}\label{eq: Group of critical regularity}
G_{n,\alpha}:=\langle\Gamma, R_1,\ldots,R_{n-1},\hat{H}_\alpha\rangle\le \Diff_+^\alpha(\s^n). 
\end{equation}

\subsection{Groups on the real projective space}\label{Subsec:projective group of crit reg}
As in the sphere case, we construct the desired group by enlarging $\tilde\Gamma$. We first adjoin suitable rotations that generate a dense subgroup of $\tilde K$, and then adjoin a projective analogue of the Kim-Koberda group used above.

Define 
\begin{equation*}
 \tilde K:=\tilde\rho\left\lbrace \begin{pmatrix}
    B & \vec{0}^t\\
    \vec{0} & 1
\end{pmatrix} : B\in \SO(n) \right\rbrace. 
\end{equation*}
Although $\tilde\rho$ is not injective when $n$ is odd, its restriction to this subgroup is injective, and hence $\SO(n) \cong \tilde K$. Indeed, if a matrix $\operatorname{diag}(B, 1)$ acts trivially on $\RP^n$, then it is a scalar matrix, and since its last diagonal entry is $1$, we must have $B = I$. Likewise, let $\tilde R_1, \dots, \tilde R_{n-1}$ be the images under $\tilde\rho$ of the rotations $\rho^{-1}(R_1), \dots, \rho^{-1}(R_{n-1})$ introduced in Section~\ref{Subsec:sphere group of crit reg}. It follows from Lemma~\ref{Lem:exists-dense-rotations} that $\langle \tilde R_1, \dots, \tilde R_{n-1}\rangle $ is dense in $\tilde K$. 

We now construct the projective analogue of the subgroup $\hat{H}_\alpha$ used in the sphere case. The construction is similar to \eqref{eq: hat g}, but starts with a copy of $H_\alpha$ supported in $(0,1)$. We first extend these elements oddly to $[-1,1]$, and then apply the sphere construction to these odd extensions. The resulting diffeomorphisms commute with the antipodal map and hence descend to $\RP^n$.

By Theorem~\ref{thm: kim-koberda}, after an affine conjugation, we may fix a finitely generated compactly supported group $H_\alpha \leq \Diff_c^\alpha( (0,1))$ satisfying the conclusions of the theorem. Then every $g \in H_\alpha$ acts as the identity outside a uniform compact interval. For $g \in H_\alpha$, define its \textit{odd extension} $\E_g \colon [-1, 1] \to [-1, 1]$ by  
\begin{equation}\label{Eq:g-odd-definition}
\E_g(t) = \begin{cases} g(t), & t \geq 0, \\ -g(-t), & t \leq 0.\end{cases} 
\end{equation}
Since $g$ acts as the identity on a neighbourhood of $0$ (see Figure~\ref{Fig:odd-extension-interval}), the map $\E_g$ is a $C^\alpha$-diffeomorphism of $[-1, 1]$. It also agrees with the identity in a neighbourhood of the endpoints and satisfies $\E_g(-t)=-\E_g(t)$ for every $t \in [-1,1]$. Moreover, for $t \leq 0$ we have 
\[
\E_g(\E_h(t)) = -g(h(-t)) = \E_{gh}(t).
\] Thus, the assignment $g \mapsto \E_g$ is an injective homomorphism $H_\alpha \to \Diff^\alpha([-1,1])$. 

\begin{figure}[ht]
\centering
\begin{tikzpicture}[x=1.05cm,y=1.05cm,line cap=round,line join=round,
  endpoint/.style={
    circle,
    draw=black,
    fill=white,
    line width=.72pt,
    inner sep=1.35pt
  }
]
  \def\xmin{0}
  \def\leftinner{1.10}
  \def\minusE{4.35}
  \def\zero{5.00}
  \def\plusE{5.65}
  \def\rightinner{8.90}
  \def\xmax{10.00}
  \def\A{0.85}

  \def\flatness{0.1}

  \draw[line width=0.55pt] (\xmin,0) -- (\leftinner,0);

  \draw[line width=0.55pt]
    plot[samples=160,domain=\leftinner:\minusE]
      (\x,{-\A*(sin(180*(\x-\leftinner)/(\minusE-\leftinner)))^2
                *(1+\flatness*(cos(180*(\x-\leftinner)/(\minusE-\leftinner)))^2)});

  \draw[line width=0.55pt] (\minusE,0) -- (\plusE,0);

  \draw[line width=0.55pt]
    plot[samples=160,domain=\plusE:\rightinner]
      (\x,{\A*(sin(180*(\x-\plusE)/(\rightinner-\plusE)))^2
               *(1+\flatness*(cos(180*(\x-\plusE)/(\rightinner-\plusE)))^2)});

  \draw[line width=0.55pt] (\rightinner,0) -- (\xmax,0);

  \foreach \x in {\leftinner,\minusE,\zero,\plusE,\rightinner}
    \draw[line width=0.55pt] (\x,-0.13) -- (\x,0.13);

  \node[endpoint] at (\xmin,0) {};
  \node[endpoint] at (\xmax,0) {};

  \node[font=\small,below=5pt] at (\xmin,0) {$-1$};
  \node[font=\small,below=5pt] at (\leftinner,0) {$\varepsilon-1$};
  \node[font=\small,above=5pt] at (\minusE,0) {$-\varepsilon$};
  \node[font=\small,below=5pt] at (\zero,0) {$0$};
  \node[font=\small,above=5pt] at (\plusE,0) {$\varepsilon$};
  \node[font=\small,below=5pt] at (\rightinner,0) {$1-\varepsilon$};
  \node[font=\small,below=5pt] at (\xmax,0) {$1$};
  \node[font=\small,above=7pt] at ({(\plusE+\rightinner)/2},\A) {$\E_g$};
\end{tikzpicture}
\caption{The odd extension $\E_g$ of the maps $g\in H_{\alpha}\leq \Diff_{+}^{\alpha}(0,1)$, supported away from $\{-1,0,1\}$.}
\label{Fig:odd-extension-interval}
\end{figure}

We may therefore apply the construction \eqref{eq: hat g} with $\E_g$ in place of $g$. Together with Lemma~\ref{lem: hat H_alpha}, this gives an embedding 
\[
H_\alpha\to\Diff^\alpha(\s^n),\quad g\mapsto\hat{g},
\]
where
\begin{equation}\label{eq: hat g projective}
\hat{g}(v,t) :=\left(\sqrt{\frac{1-\E_g(t)^2}{1-t^2}}\, v,\E_g(t)\right),
\end{equation}
away from the poles, and the map is extended to the poles as before.  

Now, since $\E_g$ is odd we have $\hat{g}(-v,-t)=-\hat{g}(v,t)$. Hence, $\hat{g}$ commutes with the antipodal map and therefore descends to a $C^\alpha$-diffeomorphism $\tilde{g} \colon \RP^n \to \RP^n$ satisfying $\pi \circ \hat{g} = \tilde{g} \circ \pi.$


\begin{figure}[h]
\centering

\begin{tikzpicture}[scale=0.9,
  >=Latex,
  line cap=round,
  line join=round,
  font=\small,
  outline/.style={line width=.85pt},
  sphere fill/.style={fill=black!8},
  equator front/.style={line width=1.45pt},
  latitude front/.style={line width=.62pt},
  latitude back/.style={
    line width=.45pt,
    dashed,
    dash pattern=on 2.8pt off 2.4pt
  },
  meridian/.style={line width=.82pt},
  hidden meridian/.style={
    line width=.55pt,
    dashed,
    dash pattern=on 3pt off 2.5pt
  },
  action/.style={
    -{Latex[length=2mm,width=1.2mm]},
    line width=.82pt
  },
  projection/.style={
    -{Latex[length=2.5mm,width=1.5mm]},
    line width=.9pt
  },
  interval/.style={line width=.72pt},
  point/.style={
    circle,
    fill=black,
    inner sep=1.3pt
  }
]


\def\R{2.15}
\def\flat{0.18}
\def\M{0.86}

\pgfmathsetmacro{\be}{\flat*\R}

\def\angA{58}
\def\angB{32}

\pgfmathsetmacro{\xE}{\R*cos(300)}
\pgfmathsetmacro{\yE}{\be*sin(300)}

\pgfmathsetmacro{\yt}{\yE*cos(\angA) + \R*sin(\angA)}
\pgfmathsetmacro{\yg}{\yE*cos(\angB) + \R*sin(\angB)}

\pgfmathsetmacro{\at}{sqrt(\R*\R - \yt*\yt)}
\pgfmathsetmacro{\bt}{\flat*\at}

\pgfmathsetmacro{\ag}{sqrt(\R*\R - \yg*\yg)}
\pgfmathsetmacro{\bg}{\flat*\ag}

%

\def\angSt{41.52286}
\def\angSg{14.36054}


\def\angPt{49.94632}
\def\angPg{23.43582}

\pgfmathsetmacro{\angmidS}{0.5*(\angSt+\angSg)}
\pgfmathsetmacro{\angmidP}{0.5*(\angPt+\angPg)}

\def\Ix{-3.05}
\def\Ih{2.15}


\begin{scope}[xshift=-4.1cm]


  \draw[interval] (\Ix,-\Ih) -- (\Ix,\Ih);

  \draw[interval,fill=white]
    (\Ix,\Ih) circle[radius=.073];

  \draw[interval,fill=white]
    (\Ix,-\Ih) circle[radius=.073];

  \node[above=4pt] at (\Ix,\Ih) {$1$};
  \node[below=4pt] at (\Ix,-\Ih) {$-1$};

  \node[point] at (\Ix,\yt) {};
  \node[point] at (\Ix,\yg) {};
  \node[point] at (\Ix,-\yg) {};
  \node[point] at (\Ix,-\yt) {};

  \node[anchor=east] at (\Ix-.15,\yt) {$t$};
  \node[anchor=east] at (\Ix-.15,\yg) {$g(t)$};
  \node[anchor=east] at (\Ix-.15,-\yg) {$-g(t)$};
  \node[anchor=east] at (\Ix-.15,-\yt) {$-t$};

  \draw[action]
    (\Ix,\yt-.06) --
    (\Ix,\yg+.06)
    node[midway,right=5pt] {$\E_g$};

  \draw[action]
    (\Ix,-\yt+.06) --
    (\Ix,-\yg-.06);


  \fill[sphere fill]
    (0,0) circle[radius=\R];

  \draw[latitude back]
    (0,\yt) ++(\at,0)
    arc[
      start angle=0,
      end angle=180,
      x radius=\at,
      y radius=\bt
    ];

  \draw[latitude back]
    (0,\yg) ++(\ag,0)
    arc[
      start angle=0,
      end angle=180,
      x radius=\ag,
      y radius=\bg
    ];

  \draw[latitude back]
    (0,-\yg) ++(\ag,0)
    arc[
      start angle=0,
      end angle=180,
      x radius=\ag,
      y radius=\bg
    ];

  \draw[latitude back]
    (0,-\yt) ++(\at,0)
    arc[
      start angle=0,
      end angle=180,
      x radius=\at,
      y radius=\bt
    ];


  \begin{scope}
    \clip (0,0) circle[radius=\R];

    \draw[meridian]
      plot[
        domain=-90:90,
        samples=180
      ]
      ({\M*cos(\x)},{\R*sin(\x)});

    \draw[action]
      plot[
        domain=\angSt:\angSg,
        samples=60
      ]
      ({\M*cos(\x)},{\R*sin(\x)});
  \end{scope}


  \draw[latitude front]
    (0,\yt) ++(-\at,0)
    arc[
      start angle=180,
      end angle=360,
      x radius=\at,
      y radius=\bt
    ];

  \draw[latitude front]
    (0,\yg) ++(-\ag,0)
    arc[
      start angle=180,
      end angle=360,
      x radius=\ag,
      y radius=\bg
    ];

  \draw[latitude front]
    (0,-\yg) ++(-\ag,0)
    arc[
      start angle=180,
      end angle=360,
      x radius=\ag,
      y radius=\bg
    ];

  \draw[latitude front]
    (0,-\yt) ++(-\at,0)
    arc[
      start angle=180,
      end angle=360,
      x radius=\at,
      y radius=\bt
    ];

  \draw[equator front]
    (0,0) ++(-\R,0)
    arc[
      start angle=180,
      end angle=360,
      x radius=\R,
      y radius=\be
    ];

  \draw[outline]
    (0,0) circle[radius=\R];

  \draw[outline,fill=white]
    (0,\R) circle[radius=.073];

  \draw[outline,fill=white]
    (0,-\R) circle[radius=.073];


  \coordinate (Pt) at
    ({\M*cos(\angSt)},
     {\R*sin(\angSt)});

  \coordinate (Pg) at
    ({\M*cos(\angSg)},
     {\R*sin(\angSg)});

  \node[point] at (Pt) {};
  \node[point] at (Pg) {};

  \node[anchor=west]
    at ({\M*cos(\angmidS)+.15},
        {\R*sin(\angmidS)-0.1})
    {$\hat g$};

  \node[font=\bfseries]
    at (0,2.55)
    {$\Sn$};

\end{scope}


\draw[projection]
  (-1.05,.25) -- (1.05,.25)
  node[midway,above=4pt] {$\pi$}
  node[midway,below=5pt]
  {$x\longmapsto \{x,-x\}$};


\begin{scope}[xshift=4.1cm]

  \path[sphere fill]
    (-\R,0)
    arc[
      start angle=180,
      end angle=0,
      radius=\R
    ]
    arc[
      start angle=0,
      end angle=-180,
      x radius=\R,
      y radius=\be
    ]
    -- cycle;


  \draw[latitude back]
    (0,\yt) ++(\at,0)
    arc[
      start angle=0,
      end angle=180,
      x radius=\at,
      y radius=\bt
    ];

  \draw[latitude back]
    (0,\yg) ++(\ag,0)
    arc[
      start angle=0,
      end angle=180,
      x radius=\ag,
      y radius=\bg
    ];

  \draw[meridian,line cap=butt]
    plot[
      domain=90:0,
      samples=140
    ]
    ({\xE*cos(\x)},
     {\yE*cos(\x)+\R*sin(\x)});

  \draw[latitude front]
    (0,\yt) ++(-\at,0)
    arc[
      start angle=180,
      end angle=360,
      x radius=\at,
      y radius=\bt
    ];

  \draw[latitude front]
    (0,\yg) ++(-\ag,0)
    arc[
      start angle=180,
      end angle=360,
      x radius=\ag,
      y radius=\bg
    ];


  \coordinate (Q1) at
    ({\xE*cos(\angPt)},
     {\yE*cos(\angPt)+\R*sin(\angPt)});

  \coordinate (Q2) at
    ({\xE*cos(\angPg)},
     {\yE*cos(\angPg)+\R*sin(\angPg)});

  \node[point] at (Q1) {};
  \node[point] at (Q2) {};

  \draw[action]
    plot[
      domain=\angPt:\angPg,
      samples=60
    ]
    ({\xE*cos(\x)},
     {\yE*cos(\x)+\R*sin(\x)});

  \node[anchor=west]
    at ({\xE*cos(\angmidP)+.10},
        {\yE*cos(\angmidP)+\R*sin(\angmidP)-0.15})
    {$\tilde g$};

  \draw[outline]
    (-\R,0)
    arc[
      start angle=180,
      end angle=0,
      radius=\R
    ];

  \draw[equator front]
    (0,0) ++(-\R,0)
    arc[
      start angle=180,
      end angle=360,
      x radius=\R,
      y radius=\be
    ];

  \draw[outline,fill=white]
    (0,\R) circle[radius=.073];

  \node[font=\bfseries]
    at (0,2.55)
    {$\RP^n$};

\end{scope}

\end{tikzpicture}

\caption{
The odd extension $\E_g$ gives rise to the sphere
diffeomorphism $\hat g$, which descends through
$\pi:\Sn\to\mathbb{RP}^n$ to the induced diffeomorphism
$\tilde g$.
}
\label{fig:action-rpn}
\end{figure}

\begin{lemma}\label{Lem:gproj-injects-indo-diffalpha}
The map $g \mapsto \tilde{g}$ defines an injective homomorphism $H_\alpha \rightarrow \Diff^\alpha(\RP^n)$. 
\end{lemma}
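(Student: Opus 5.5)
The plan is to obtain all three properties (homomorphism, regularity, injectivity) from the sphere construction, using the covering $\pi\colon\Sn\to\RP^n$.

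\textbf{Homomorphism.} The assignment $g\mapsto\E_g$ is a homomorphism $H_\alpha\to\Diff^\alpha([-1,1])$, and each $\E_g$ is the identity near $\pm1$. Hence the argument of Lemma~\ref{lem: hat H_alpha} applies verbatim with $\E_g$ in place of $g$: the map $g\mapsto\hat g$ given by \eqref{eq: hat g projective} is a homomorphism into $\Diff^\alpha(\s^n)$, since $\widehat{fg}=\hat f\hat g$ follows from $\E_{fg}=\E_f\E_g$. Each $\hat g$ commutes with $A$, so it descends to $\tilde g$. For $f,g\in H_\alpha$ we have
\[
\widetilde{fg}\circ\pi=\pi\circ\widehat{fg}=\pi\circ\hat f\circ\hat g=\tilde f\circ\pi\circ\hat g=\tilde f\circ\tilde g\circ\pi .
\]
Since $\pi$ is surjective, $\widetilde{fg}=\tilde f\tilde g$.

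\textbf{Regularity.} Because $\pi$ is a smooth local diffeomorphism, every point of $\RP^n$ has a neighbourhood $U$ with a smooth local section $s\colon U\to\Sn$ of $\pi$. On such a neighbourhood $\tilde g=\pi\circ\hat g\circ s$, a composition of $C^\infty$ maps with a $C^\alpha$ map, so $\tilde g$ is locally $C^\alpha$. Applying the same argument to $\widetilde{g^{-1}}=\tilde g^{-1}$ shows the inverse is $C^\alpha$ as well, so $\tilde g\in\Diff^\alpha(\RP^n)$. For non-integer $\alpha$, Hölder continuity of the top derivatives is preserved under composition with smooth maps on compact sets, which is the routine point to note here.

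\textbf{Injectivity.} This is the only step needing some care, since $\pi$ has two sheets. Suppose $\tilde g=\id_{\RP^n}$. Then $\pi\circ\hat g=\pi$, so for each $x$ we have $\hat g(x)\in\{x,-x\}$. The sign map $x\mapsto\langle\hat g(x),x\rangle$ is continuous with values in $\{\pm1\}$, hence constant on the connected space $\Sn$. Near the poles $\hat g$ is the identity, so the sign is $+1$ and $\hat g=\id$. Injectivity of $g\mapsto\hat g$ then gives that $\E_g=\id$ (compare last coordinates), and restricting to $t\ge0$ gives $g=\id$. Alternatively, one can avoid the sign argument by noting that the last coordinate of $\hat g$ equals $\E_g(t)$ and $\E_g(t)=-t$ is impossible for $t$ near $1$.
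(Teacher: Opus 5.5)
Your proof is correct and follows the paper's route. The homomorphism property is pulled down from $\hat g$ using surjectivity of $\pi$, and injectivity comes from $\pi\circ\hat g=\pi$ forcing $\hat g\in\{\id_{\Sn},A\}$ by connectedness, with $A$ excluded because $\hat g$ is the identity somewhere: you use a neighbourhood of the poles, the paper a neighbourhood of the equator (where $\E_g$ is the identity near $0$); your local-section argument for $C^\alpha$ regularity makes explicit what the paper asserts before the lemma. Your closing \emph{alternative} does not really avoid a connectedness or monotonicity step, since ruling out $\E_g(t)=-t$ for $t$ near $1$ alone does not give $\E_g=\id$ everywhere, but this is harmless because your main argument is complete.
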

\begin{proof}
The homomorphism property follows from the corresponding property upstairs in $\Sn$. We only need to prove injectivity. Suppose that $\tilde {g} = \id_{\RP^n}$. Then since $\pi \circ \hat{g} = \tilde{g} \circ \pi$ we have $\pi \circ \hat{g} = \pi$. So, for every $x\in\s^n$ we have $\hat{g}(x)\in \{-x,x\}$. By continuity and connectedness of $\s^n$, it follows that $\hat{g} \in \{ \id_{\Sn}, A\}$. But $\E_g$ is the identity on a neighbourhood of $0$, so \eqref{eq: hat g projective} shows that $\hat{g}$ is the identity on a neighbourhood of the equator. Hence $\hat{g}  \neq A$, so that $\hat{g} = \id_{\Sn}$. Using the final coordinate in \eqref{eq: hat g projective}, this shows that $\E_g(t) = t$ for every $t \in [-1,1]$, so that $g = \id_{(0,1)}$. Thus the map is injective. 
\end{proof}

Let $\tilde H_\alpha$ denote the image of $H_\alpha$ under the embedding of Lemma~\ref{Lem:gproj-injects-indo-diffalpha}, and define
\begin{equation}\label{Eq:G-proj-def}
\tilde G_{n,\alpha} := \big\langle \tilde \Gamma, \tilde R_1, \dots, \tilde R_{n-1}, \tilde{H}_\alpha \big\rangle \leq \Diff^\alpha(\RP^n). 
\end{equation}
Since $\tilde \Gamma$ and $\tilde H_\alpha$ are finitely generated, so is $\tilde G_{n,\alpha}$. 

\section{Proof of Theorem \ref{thm: main}}

\noindent We now prove our main result. The arguments for $\s^n$ and $\RP^n$ follow the same general strategy. In both cases, after using rigidity to normalize a hypothetical $C^\beta$-embedding, we show that the image of the corresponding Kim-Koberda subgroup centralizes the appropriate rotation group. We then pass to the quotient by this rotation group and obtain an action on the interval. 

 For $n \geq 2$, we regard $\Sn \subseteq \R^n \times \R$, and define the latitude map $\lambda \colon \Sn \to [-\pi/2, \pi/2]$ by $\lambda(v,t)=\arcsin(t)$. For $s \in (-\pi/2, \pi/2)$, every point $(v,t) \in \Sn$ with $\lambda(v,t) = s$ can be written uniquely as $(u\cos(s), \sin(s))$ where $u \in \mathbb{S}^{n-1}$. Since the subgroup $K\le\Diff_+^\infty(\s^n)$ defined in \eqref{eq: def of subg K isom SO(n)} preserves the last coordinate and acts transitively on each latitude, the fibers of $\lambda$ are precisely the $K$-orbits. Thus, $\lambda$ induces a continuous bijection $\overline{\lambda}:\Sn / K \to [-\pi/2, \pi/2]$. Since $\Sn / K$ is compact and the interval is Hausdorff, $\overline{\lambda}$ is a homeomorphism. Therefore, 
 \begin{equation}\label{eq: interval quotient Sn}
 \frac{\s^n}{K}\simeq \left[-\frac{\pi}{2},\frac{\pi}{2}\right].
 \end{equation}
The latitude map satisfies $\lambda(-v,-t)=-\lambda(v,t)$, and hence $|\lambda|$ is invariant under the antipodal map. It therefore descends to a continuous map $\tilde\lambda:\RP^n\to[0,\pi/2 ]$ satisfying $\tilde\lambda\circ\pi=|\lambda|$. The fibers of $\tilde \lambda$ are precisely the $\tilde K$-orbits. Indeed, $\tilde K$ preserves $|t|$, while $K\cong\SO(n)$ acts on each sphere transitively on the first $n$ coordinates. Hence $\tilde \lambda$ induces a homeomorphism
\begin{equation}\label{eq: interval quotient RPn}
    \frac{\RP^n}{\tilde K}\simeq \left[0,\frac{\pi}{2}\right].
\end{equation}

The following proposition provides the quotient maps that will be used in both cases (see Figure~\ref{fig:latitudes}). Its proof is postponed to Section~\ref{Sec:proof map Pi_n}. 
\begin{proposition}\label{prop: map Pi_n}
Let $n\ge2$ and $\beta>1$. Under the identifications \eqref{eq: interval quotient Sn} and \eqref{eq: interval quotient RPn}, consider the assignments
\begin{equation}\label{eq: assignment psi_f}
f\mapsto\psi_f,\quad \psi_f(K\cdot v):=K\cdot f(v),\quad \text{for}\quad f\in \Centr_{\Diff^\beta(\s^n)}(K),
\end{equation} and \begin{equation}\label{eq: assignment tilde psi_f}
f\mapsto\tilde\psi_f,\quad \tilde\psi_f(\tilde K\cdot \pi(v)):=\tilde K\cdot f(\pi(v)),\quad \text{for}\quad f\in \Centr_{\Diff^\beta(\RP^n)}(\tilde K).
\end{equation}
Then these assignments define group homomorphisms 
\[
\Pi_n:\Centr_{\Diff^\beta(\s^n)}(K)\to \Diff^\beta\left(\left[-\pi/2,\pi/2\right]\right),
\] and \[
\tilde \Pi_n:\Centr_{\Diff^\beta(\RP^n)}(\tilde K)\to \Diff^\beta\left(\left[0,\pi/2\right]\right).
\]
Moreover, both kernels are abelian. More precisely $\ker(\Pi_2)$ and $\ker(\tilde\Pi_2)$ are abelian, and for every $n\ge 3$, $\ker(\Pi_n)\cong\ker(\tilde\Pi_n)\cong\Z/2\Z$.
\end{proposition}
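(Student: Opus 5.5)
The plan has three parts: show that $\psi_f$ and $\tilde\psi_f$ are well-defined homeomorphisms and that the assignments are homomorphisms; show that they have class $C^\beta$, first in the interior and then at the endpoints; and finally compute the kernels.

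\textbf{Well-definedness and homomorphism property.} Since $f$ commutes with $K$, it maps $K$-orbits onto $K$-orbits: $f(K\cdot v)=K\cdot f(v)$. Hence $\psi_f$ is well defined on $\s^n/K$. The same holds for $f^{-1}$, so $\psi_f$ is a bijection with inverse $\psi_{f^{-1}}$. It is continuous because it is induced on the quotient, and the quotient is identified with $[-\pi/2,\pi/2]$ by \eqref{eq: interval quotient Sn}. The relation $\psi_{fg}=\psi_f\psi_g$ follows directly from the definition. The same argument applies to $\tilde\psi_f$ on $\RP^n/\tilde K\simeq[0,\pi/2]$.

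\textbf{Interior regularity.} On the open interval I will use the meridian $\gamma(s)=(\cos(s)e_1,\sin(s))$. Then
\[
\psi_f(s)=\arcsin\bigl(\langle f(\gamma(s)),e_{n+1}\rangle\bigr).
\]
This is $C^\beta$ wherever the last coordinate of $f(\gamma(s))$ stays away from $\pm1$. This covers all $s\in(-\pi/2,\pi/2)$, because the poles are exactly the $K$-fixed points and $f$ permutes them.

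\textbf{Regularity at the endpoints (main obstacle).} At the poles the coordinate $\arcsin$ degenerates, so I will change coordinates. Near the north pole $N$, parametrize a full great circle through $N$ by $\gamma(r)=(\sin(r)e_1,\cos(r))$ for $r\in(-\varepsilon,\varepsilon)$, and write $f(\gamma(r))=(w(r),c(r))$ with $w$ and $c$ of class $C^\beta$.

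For $n\ge3$, the stabilizer of $\gamma(r)$ in $K$ is $\SO(n-1)$ fixing $e_1$. Since $f$ is $K$-equivariant, this group fixes $f(\gamma(r))$, which forces $w(r)=a(r)e_1$ with $a$ of class $C^\beta$. In the distance-to-pole coordinate, $\psi_f$ is then the restriction to $r\ge0$ of the $C^\beta$ odd function $\mathrm{atan2}(a(r),c(r))$, so it is $C^\beta$ up to the endpoint.

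For $n=2$ the stabilizer is trivial, and I expect this to be the hardest step. Here the rotation by $\pi$ in $K$ gives $w(-r)=-w(r)$. In polar coordinates around $N$, the $\SO(2)$-equivariance forces $f$ to have the form $(\varrho,\theta)\mapsto(P(\varrho),\theta+\Theta(\varrho))$. I will try to show that $P$ extends to an odd $C^\beta$ function. To do so, I would recover $P(\varrho)$ as $\langle w(\varrho),\,e^{i\Theta(\varrho)}e_1\rangle$, and show that $\Theta$ is regular enough by using the linear conformal structure of $Df(N)$: this derivative commutes with $\SO(2)$, so it is a rotation composed with a scaling. The naive factorization $w(r)=rW(r)$ loses a derivative, so this point needs care.

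The case $\RP^n$ is handled by lifting $f$ to $\s^n$ (as in Lemma~\ref{Lem:centralizer-of-P(SL)}). The endpoint $0$ corresponds to the equator, where $\lambda$ is smooth, and the endpoint $\pi/2$ is handled exactly as the pole above.

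\textbf{Kernels.} If $\psi_f=\id$, then $f$ preserves every latitude and restricts on each latitude $\cong\s^{n-1}$ to an $\SO(n)$-equivariant homeomorphism.

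For $n\ge3$, the image of $u$ must be fixed by the stabilizer $\SO(n-1)$ of $u$, so it lies in $\{\pm u\}$. By continuity the sign is constant on each latitude, and then constant in $s$. Hence $f\in\{\id,\,(v,t)\mapsto(-v,t)\}$, and the second map is smooth and commutes with $K$. Therefore $\ker\Pi_n\cong\Z/2\Z$.

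For $n=2$, every $\SO(2)$-equivariant homeomorphism of a circle is a rotation. The kernel therefore embeds into the group of continuous maps from latitudes to $\SO(2)$, which is abelian.

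For $\tilde\Pi_n$, I will lift $f$ to $\s^n$. A lift of a kernel element either preserves each latitude or maps $\lambda$ to $-\lambda$, and composing with $A$ if necessary reduces to the sphere case. For $n\ge3$ this gives exactly one nontrivial element, and for $n=2$ the kernel is again abelian.
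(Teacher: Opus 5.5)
\textbf{The gap: regularity at the poles for $n=2$.} Most of your plan tracks the paper: the quotient maps, interior regularity via the meridian, the kernel computations, and the reduction of $\RP^n$ to $\s^n$ by lifting. For $n\ge3$, your stabilizer argument at the poles (forcing $w(r)=a(r)e_1$ and then using $\mathrm{atan2}$, measured from the appropriate pole if $f$ swaps them) is a valid alternative to the paper's argument. For $n=2$, however, you leave the pole regularity open, and the route you sketch cannot work as stated. The angular function $\Theta$ need not be better than $C^{\beta-1}$ at the pole. Take $1<\beta<2$ and let $f$ commute with $K$ and, near the north pole in the chart $(v,t)\mapsto v\in\R^2\cong\mathbb{C}$, equal $z\mapsto z\,e^{i|z|^{\beta-1}}$, suitably cut off away from the pole. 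This $f$ is a $C^\beta$-diffeomorphism, yet $\Theta$ behaves like $\varrho^{\beta-1}$ and is not even $C^1$ at $0$. Invertibility and conformality of $Df_N$ essentially give you only $w'(0)\neq0$, so that $\Theta$ extends continuously to $0$. That is not enough to conclude $P\in C^\beta$ from the formula $P=\langle w,e^{i\Theta}e_1\rangle$.

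\textbf{The missing idea.} This is the content of Lemma~\ref{Lem:endpoint-regularity}: you never need $\Theta$. It suffices to show that the signed norm $\nu(r)=\operatorname{sign}(r)\lVert w(r)\rVert$ is $C^\beta$, since $\psi_f(\pi/2-r)=\pm\arccos\nu(r)$ for small $r\ge0$. Write $w(r)=rW(r)$ with $W(r)=\int_0^1 w'(sr)\,ds$. You correctly observe that $W$ is only $C^{\beta-1}$. But $W(0)=w'(0)\neq0$, and for $r\neq0$
\[
\nu'(r)=\Bigl\langle \frac{W(r)}{\lVert W(r)\rVert},\,w'(r)\Bigr\rangle .
\]
This identity also holds at $r=0$, since $\nu(r)=r\lVert W(r)\rVert$. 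The right-hand side is a product of $C^{\beta-1}$ functions, so $\nu'\in C^{\beta-1}$ and $\nu\in C^\beta$. The derivative lost in the factorization is one you can afford to lose, because only $\nu'$ has to be $C^{\beta-1}$. This argument works verbatim for every $n\ge2$, so the paper needs no case distinction at the poles.

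\textbf{A smaller omission in the $\RP^n$ case.} You should justify that the lift $F$ commutes with $K$ exactly, not just up to $A$. Otherwise $F$ a priori only maps each latitude into a pair of latitudes $\{\pm s\}$, and $\psi_F$ is undefined. The paper writes $Fk=A^{\varepsilon(k)}kF$ and gets $\varepsilon\equiv0$ from the continuity of $k\mapsto\varepsilon(k)\in\Z/2\Z$ together with the connectedness of $K$.
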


\begin{figure}[h]
\centering
\begin{tikzpicture}[scale=0.8,
  line cap=round,
  line join=round,
  >=Latex,
  font=\small,
  sphere fill/.style={fill=black!8},
  sphere outline/.style={line width=.78pt},
  latitude front/.style={line width=.65pt},
  latitude back/.style={line width=.48pt,dashed,dash pattern=on 3pt off 2.6pt},
  axis line/.style={line width=.46pt},
  radius line/.style={line width=.54pt},
  angle mark/.style={line width=.48pt},
  action arrow/.style={-{Latex[length=2.1mm,width=1.25mm]},line width=.82pt},
  rotation arrow/.style={-{Latex[length=1.85mm,width=1.1mm]},line width=.68pt}
]

\def\R{2.35}          
\def\flat{0.19}       
\def\sep{3.72}        
\def\yL{1.08}         
\def\yR{1.72}         
\def\axisbelow{0.34}  
\def\axisabove{0.80}  
\def\rotY{0.57}       
\def\rotXrad{0.40}    
\def\rotYrad{0.125}   

\pgfmathsetmacro{\aL}{sqrt(\R*\R-\yL*\yL)}
\pgfmathsetmacro{\bL}{\flat*\aL}
\pgfmathsetmacro{\aR}{sqrt(\R*\R-\yR*\yR)}
\pgfmathsetmacro{\bR}{\flat*\aR}
\pgfmathsetmacro{\be}{\flat*\R}
\pgfmathsetmacro{\thetaL}{asin(\yL/\R)}
\pgfmathsetmacro{\thetaR}{asin(\yR/\R)}
\pgfmathsetmacro{\halfThetaL}{0.5*\thetaL}
\pgfmathsetmacro{\halfThetaR}{0.5*\thetaR}

\newcommand{\Krotation}{%
  \draw[rotation arrow]
    (0,{\R+\rotY}) ++(105:{\rotXrad} and {\rotYrad})
    arc[start angle=105,end angle=425,
        x radius=\rotXrad,y radius=\rotYrad];
  \node at (0,{\R+1.02}) {$K$};
}

\begin{scope}[shift={(-\sep,0)}]
  \fill[sphere fill] (0,0) circle[radius=\R];
  \draw[sphere outline] (0,0) circle[radius=\R];

  \draw[latitude back] (0,0) ++(0:\R)
    arc[start angle=0,end angle=180,x radius=\R,y radius=\be];
  \draw[latitude front] (0,0) ++(180:\R)
    arc[start angle=180,end angle=360,x radius=\R,y radius=\be];

  \draw[latitude back] (0,\yL) ++(0:\aL)
    arc[start angle=0,end angle=180,x radius=\aL,y radius=\bL];
  \draw[latitude front] (0,\yL) ++(180:\aL)
    arc[start angle=180,end angle=360,x radius=\aL,y radius=\bL];

  \draw[axis line] (0,{-\R-\axisbelow}) -- (0,{\R+\axisabove});

  \coordinate (CL) at (0,0);
  \coordinate (v) at (\aL,\yL);
  \draw[radius line] (CL) -- (\R,0);
  \draw[radius line] (CL) -- (v);

  \draw[angle mark] (0.54,0)
    arc[start angle=0,end angle=\thetaL,radius=.54];
  \node at ({0.84*cos(\halfThetaL)},{0.84*sin(\halfThetaL)}) {$s$};

  \fill (CL) circle[radius=.058];
  \fill (v) circle[radius=.062];
  \node[anchor=west] at ($(v)+(0.11,0.05)$) {$v$};

  \Krotation
\end{scope}

\begin{scope}[shift={(\sep,0)}]
  \fill[sphere fill] (0,0) circle[radius=\R];
  \draw[sphere outline] (0,0) circle[radius=\R];

  \draw[latitude back] (0,0) ++(0:\R)
    arc[start angle=0,end angle=180,x radius=\R,y radius=\be];
  \draw[latitude front] (0,0) ++(180:\R)
    arc[start angle=180,end angle=360,x radius=\R,y radius=\be];

  \draw[latitude back] (0,\yR) ++(0:\aR)
    arc[start angle=0,end angle=180,x radius=\aR,y radius=\bR];
  \draw[latitude front] (0,\yR) ++(180:\aR)
    arc[start angle=180,end angle=360,x radius=\aR,y radius=\bR];

  \draw[axis line] (0,{-\R-\axisbelow}) -- (0,{\R+\axisabove});

  \coordinate (CR) at (0,0);
  \coordinate (fv) at (\aR,\yR);
  \draw[radius line] (CR) -- (\R,0);
  \draw[radius line] (CR) -- (fv);

  \draw[angle mark] (0.54,0)
    arc[start angle=0,end angle=\thetaR,radius=.54];
  \node[anchor=west] at (0.48,0.2) {$\psi_f(s)$};

  \fill (CR) circle[radius=.058];
  \fill (fv) circle[radius=.062];
  \node[anchor=west] at ($(fv)+(0.12,0.08)$) {$f(v)$};

  \Krotation
\end{scope}

\draw[action arrow]
  (-1.18,1.54)
  .. controls (-0.48,1.82) and (0.48,1.82) ..
  (1.18,1.54);
\node at (0,1.94) {$f$};

\end{tikzpicture}

\caption{Each $K$-orbit is a latitude on $S^n$. Since $f$ commutes with $K$, it preserves latitudes, and hence descends to a map $\psi_f$ on $[-\pi/2,\pi/2]$.}
\label{fig:latitudes}
\end{figure}

We next establish the corresponding centralizer inclusions for the normalized embeddings. These are the key steps that allow us to apply the preceding proposition.
\begin{proposition}\label{Prop:hatQ-embeds-in-centralizer}
    Let $n\ge2$, let $\beta>\alpha\ge1$, and let 
    \[
    \sigma:G\to \Diff^\beta(M)
    \]
    be an embedding. Then:
    \begin{enumerate}
    \item\label{item: 1 proposition image varrho hat H} if $(G,M)=(G_{n,\alpha},\s^n)$ and $\sigma |_\Gamma=\id_\Gamma$ we have $\sigma(\hat{H}_\alpha) \leq \Centr_{\Diff^\beta(\s^n)}(K)$,
    \item\label{item: 2 proposition image varrho tilde H} if $(G,M)=(\tilde G_{n,\alpha},\RP^n)$ and $\sigma |_{\tilde\Gamma}=\id_{\tilde\Gamma}$ we have $\sigma(\tilde{H}_\alpha) \leq \Centr_{\Diff^\beta(\RP^n)}(\tilde K)$.
    \end{enumerate}
\end{proposition}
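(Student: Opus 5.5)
The plan is to combine the map-recognition statement of Proposition~\ref{Prop:Recognition-prop} with the density of the squared rotations from Lemma~\ref{Lem:exists-dense-rotations}, and then pass to closures.

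For item~\eqref{item: 1 proposition image varrho hat H}, fix $i\in\{1,\dots,n-1\}$ and set $h_i:=\sigma(R_i)\in\Diff^\beta(\s^n)$. Since $R_i=\rho(r)$ with $r\in\SL_{n+1}(\Q)$, we restrict $\sigma$ to $\langle\Gamma,R_i\rangle$. This gives a homomorphism $\varphi\colon\langle\Gamma,R_i\rangle\to\langle\Gamma,h_i\rangle$ with $\varphi|_\Gamma=\id_\Gamma$ and $\varphi(R_i)=h_i$; here we use $\sigma|_\Gamma=\id_\Gamma$. Proposition~\ref{Prop:Recognition-prop}\eqref{item: 2 prop rig-map r} then gives $\sigma(R_i^2)=h_i^2=R_i^2$. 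Hence $\sigma$ restricts to the identity on $\langle R_1^2,\dots,R_{n-1}^2\rangle$.

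Next we use the commutation relations inside $G_{n,\alpha}$. Every $\hat g\in\hat H_\alpha$ commutes with every element of $K$. Indeed, for $k=\rho(\operatorname{diag}(B,1))$ we have $k(v,t)=(Bv,t)$, and formula \eqref{eq: hat g} is equivariant under $v\mapsto Bv$. In particular $\hat g$ commutes with each $R_i^2$. Applying $\sigma$, we get that $\sigma(\hat g)$ commutes with $\sigma(R_i^2)=R_i^2$, so $\sigma(\hat g)$ centralizes the group $\langle R_1^2,\dots,R_{n-1}^2\rangle$. The centralizer of a single homeomorphism is closed in $\Diff^0(\s^n)$ for the $C^0$ topology, because composition is continuous. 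That group is dense in $K$ by Lemma~\ref{Lem:exists-dense-rotations} with $m=2$, and the topology on $K$ coincides with the compact-open topology since $\rho$ is continuous. Therefore $\sigma(\hat g)$ commutes with all of $K$. Since $\sigma(\hat g)\in\Diff^\beta(\s^n)$, this gives $\sigma(\hat H_\alpha)\le\Centr_{\Diff^\beta(\s^n)}(K)$.

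Item~\eqref{item: 2 proposition image varrho tilde H} follows the same pattern and is slightly easier. Proposition~\ref{Prop:Recognition-prop}\eqref{item: 3 prop rig-map r-projective} gives directly $\sigma(\tilde R_i)=\tilde R_i$, so $\sigma$ is the identity on $\langle\tilde R_1,\dots,\tilde R_{n-1}\rangle$, which is dense in $\tilde K$. Each $\tilde g$ commutes with $\tilde K$, because $\hat g$ commutes with $K$ and both descend through $\pi$. Hence $\sigma(\tilde g)$ centralizes a dense subgroup of $\tilde K$, and by the same closure argument it centralizes all of $\tilde K$.

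The step needing the most care is the recognition step. One must check that restricting $\sigma$ yields exactly the kind of map Proposition~\ref{Prop:Recognition-prop} requires, namely a homomorphism onto $\langle\Gamma,h\rangle$ that is the identity on $\Gamma$; this holds by construction. One must also make sure to use the squares on the sphere, since $\sigma(R_i)$ may equal $R_iA$ rather than $R_i$. This is precisely why Lemma~\ref{Lem:exists-dense-rotations} was stated for all $m\ge1$. The density and closure arguments are routine.
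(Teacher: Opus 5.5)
Your proposal is correct and follows essentially the same route as the paper. You use recognition to get $\sigma(R_i)^2=R_i^2$ on $\s^n$ (squares, to absorb the antipodal ambiguity) and $\sigma(\tilde R_i)=\tilde R_i$ on $\RP^n$; then the commutation of $\hat g$ (resp.\ $\tilde g$) with $K$ (resp.\ $\tilde K$), and finally density of the generated subgroup together with closedness of centralizers. The only cosmetic difference is that you apply $\sigma$ to the relation $[\hat g,R_i^2]=1$ instead of squaring $[\sigma(\hat g),\sigma(R_i)]=1$, which is equivalent.
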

\begin{proof}
    We first prove \eqref{item: 1 proposition image varrho hat H}. Let $g \in \hat{H}_\alpha$. By construction $[g, R_i] = 1$ for every $1 \leq i \leq n-1$. Therefore also $[\sigma(g), \sigma(R_i)] = 1$, so that $[\sigma(g), \sigma(R_i)^2] = 1$. Apply Proposition~\ref{Prop:Recognition-prop} to $\sigma |_{\langle \Gamma, R_i\rangle}$. Since $\sigma |_\Gamma = \id_\Gamma$, by assumption, we find $\sigma(R_i)^2 = R_i^2$. Hence $[\sigma(g), R_i^2] = 1$ for all $1 \leq i \leq n-1$. Thus $\sigma(g)$ centralizes the subgroup $\langle R_1^2, \dots, R_{n-1}^2\rangle \leq K$. But this subgroup is dense in $K$ by Lemma~\ref{Lem:exists-dense-rotations}. Since the centralizer of a fixed homeomorphism is closed, $\sigma(g)$ centralizes its closure, and hence all of $K$. Thus $\sigma(g) \in \Centr_{\Diff^\beta(\s^n)}(K)$, and since $g$ was arbitrary we are done. 

    We now prove \eqref{item: 2 proposition image varrho tilde H}. Let $\tilde{g} \in \tilde{H}_\alpha$. Let $\hat g$ be the map defined by \eqref{eq: hat g projective}, so that $\pi\circ\hat g=\tilde g\circ\pi$. By its definition, $\hat g$ commutes with every element of $K$. Passing through to the quotient map $\pi \colon \Sn \to \RP^n$, we conclude that $\tilde{g}$ commutes with every element of $\tilde K$, and hence in particular $[\tilde{g}, \tilde R_i] = 1$ for all $1 \leq i \leq n-1$. Applying $\sigma$ to these relations, the remainder of the argument proceeds exactly as in the previous case, using the projective map recognition property from Proposition~\ref{Prop:Recognition-prop} and the density of $\langle\tilde R_1,\ldots,\tilde R_{n-1} \rangle$ in $\tilde K$.
\end{proof}
With these ingredients in hand, we turn to the proof of Theorem~\ref{thm: main}. 
\subsection{The sphere case}\label{Sec:proof-of-sphere}
We now prove that the group $G_{n,\alpha}$, defined in \eqref{eq: Group of critical regularity}, has critical regularity equal to $\alpha$ on $\s^n$, thereby proving Theorem~\ref{thm: main} in this case. Since $G_{n,\alpha}\le\Diff^\alpha(\s^n)$, it suffices to show that $G_{n,\alpha}$ does not embed into $\Diff^\beta(\s^n)$ for any $\beta>\alpha$. Suppose, for contradiction, that there exists an injective homomorphism 
\[
\sigma:G_{n,\alpha}\to \Diff^\beta(\s^n),\quad\text{for some }\beta>\alpha.
\]
By the $C^\beta$-rigidity of $\Gamma$, we may assume that the restriction of $\sigma$ to $\Gamma$ is the identity. Proposition~\ref{Prop:hatQ-embeds-in-centralizer} then gives $\sigma(\hat{H}_\alpha) \leq \Centr_{\Diff^\beta(\s^n)}(K)$. Let $\Pi_n$ be the homomorphism given in Proposition~\ref{prop: map Pi_n}, and consider the restriction to $[H_\alpha,H_\alpha]$ of the homomorphism
\[
H_\alpha\overset{g\mapsto\hat{g}}{\xrightarrow{\hspace{0,7cm}}}\hat{H}_\alpha\overset{\Pi_n\circ\sigma}{\xrightarrow{\hspace{0.7cm}}}\Diff^\beta([-\pi/2,\pi/2]).
\] 
Since $[H_\alpha,H_\alpha]$ is simple, the kernel of this restriction is either trivial or the whole group. The latter is impossible: it would imply that $\sigma([\hat{H}_\alpha,\hat{H}_\alpha])\subset \ker(\Pi_n),$ whereas $\ker(\Pi_n)$ is abelian and $\sigma([\hat{H}_\alpha,\hat{H}_\alpha])$ is nonabelian. Therefore, $[H_\alpha,H_\alpha]$ embeds into $\Diff^\beta([-\pi/2,\pi/2])$. Composing this embedding with the orientation sign homomorphism gives a homomorphism from $[H_\alpha,H_\alpha]$ to $\{\pm1\}$. Since $[H_\alpha,H_\alpha]$ is nonabelian and simple, this homomorphism must be trivial. Hence, $[H_\alpha,H_\alpha]$ embeds into $\Diff_+^\beta([-\pi/2,\pi/2])$ contradicting Theorem~\ref{thm: kim-koberda}. 
\subsection{The projective space case} In Section~\ref{Subsec:projective group of crit reg} we constructed finitely generated groups $\tilde G_{n,\alpha} \leq \Diff^\alpha(\RP^n)$. We now show that these groups do not embed into $\Diff^\beta(\RP^n)$ for any $\beta > \alpha$. Suppose that 
\[
\sigma \colon \tilde G_{n, \alpha} \hookrightarrow \Diff^\beta(\RP^n)\quad\text{for some}\quad\beta > \alpha.
\]
Since $\beta > \alpha \geq 1$, in particular $\beta > 1$, and hence we may apply Proposition~\ref{Prop:Recognition-prop}. Thus, after conjugating $\sigma$ by a $C^\beta$-diffeomorphism of $\RP^n$, we may assume that $\sigma \mid_{\tilde\Gamma} = \id_{\tilde\Gamma}$. Proposition~\ref{Prop:hatQ-embeds-in-centralizer} then gives $\sigma(\tilde{H}_\alpha) \leq \Centr_{\Diff^\beta(\RP^n)}(\tilde K)$. As in the sphere case, consider the homomorphism 
\[
H_\alpha\overset{g\mapsto\tilde{g}}{\xrightarrow{\hspace{0,7cm}}}\tilde{H}_\alpha\overset{\tilde\Pi_n\circ\sigma}{\xrightarrow{\hspace{0.7cm}}}\Diff^\beta([0,\pi/2]).
\]
Restricting the homomorphism to $[H_\alpha,H_\alpha]$, the remainder of the argument is exactly the same as in the sphere case: the simplicity of $[H_\alpha,H_\alpha]$, together with the fact that $\ker(\tilde\Pi_n)$ is abelian, implies that this restriction is injective, yielding the desired contradiction.

\section{Proof of Proposition \ref{prop: map Pi_n}}\label{Sec:proof map Pi_n}
\noindent We begin by establishing the following technical lemma used in the proof of Proposition~\ref{prop: map Pi_n}. 
\begin{lemma}\label{Lem:endpoint-regularity}
Let $\beta \geq 1$. Suppose that $u \colon (-\varepsilon, \varepsilon) \to \R^d$ is $C^\beta$, $u(0) = 0$, and $u'(0) \neq 0$. Then  
\[
\nu(t) = \begin{cases}
\lVert u(t) \rVert, & t \geq 0,\\
-\lVert u(t) \rVert, & t < 0 ,\\
\end{cases}
\]
is $C^\beta$ near $0$ and $\nu'(0) = \lVert u'(0) \rVert>0$. 
\end{lemma}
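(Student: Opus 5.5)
The plan is to write $u$ as $t$ times a nonvanishing vector-valued function and read $\nu$ off as $t$ times a $C^\beta$ scalar. Concretely, I will use the integral form of Hadamard's lemma:
\[
u(t)=t\,w(t),\qquad w(t)=\int_0^1 u'(st)\,ds .
\]
With this, $w(0)=u'(0)\neq0$. For $\beta\ge1$, $u'$ is $C^{\beta-1}$ (in the Hölder sense when $\beta\notin\N$). Differentiating under the integral sign gives $w^{(k)}(t)=\int_0^1 s^k u^{(k+1)}(st)\,ds$ for $k\le\lfloor\beta\rfloor-1$.

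Here I must be careful about regularity. Hadamard's lemma only gives $w\in C^{\beta-1}$ directly, not $C^\beta$, so I will not try to prove that $w$ is $C^\beta$. Instead I will avoid dividing by $t$ in the regularity count altogether.

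Note that for $t\ge0$ we have $\lVert u(t)\rVert=t\lVert w(t)\rVert$, and for $t<0$ we have $-\lVert u(t)\rVert=-|t|\,\lVert w(t)\rVert=t\lVert w(t)\rVert$. Hence $\nu(t)=t\,\lVert w(t)\rVert$ on a whole neighbourhood of $0$. Since $w(0)\neq0$, after shrinking the neighbourhood $w$ stays away from $0$, and so
\[
\nu(t)=\frac{t\,\lVert w(t)\rVert^2}{\lVert w(t)\rVert}=\frac{\langle u(t),w(t)\rangle}{\lVert w(t)\rVert}.
\]
This still involves $w$, which is only $C^{\beta-1}$, so I will not pursue it. Instead I will argue through the sign structure, as follows.

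Near $0$, $u(t)\neq0$ for $t\neq0$, so on $(0,\varepsilon')$ and $(-\varepsilon',0)$ the map $\nu=\pm\lVert u\rVert$ is $C^\beta$, being a composition with the smooth norm away from $0$. The issue is only at $0$. Set $e(t)=w(t)/\lVert w(t)\rVert$, so that $\nu(t)=\langle u(t),e(t)\rangle$.

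The derivative is
\[
\nu'=\langle u',e\rangle+\langle u,e'\rangle .
\]
Since $e$ is a unit vector, $\langle e,e'\rangle=0$, and hence $\langle u,e'\rangle=t\lVert w\rVert\langle e,e'\rangle=0$ wherever $e$ is differentiable. Therefore
\[
\nu'=\langle u',e\rangle,
\]
which is a product of $C^{\beta-1}$ functions and hence itself $C^{\beta-1}$. So $\nu\in C^\beta$. This identity holds a priori only where $w$ is $C^1$, that is for $\beta\ge2$. For $1\le\beta<2$, I will instead verify directly that $\nu$ is differentiable with $\nu'=\langle u',e\rangle$. I will do this by writing the difference quotient $(\nu(t+h)-\nu(t))/h$. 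I will use $\lVert a\rVert-\lVert b\rVert=\langle a-b,e_b\rangle+O(\lVert a-b\rVert\,\angle(a,b))$, with $e_b$ the unit vector along $b$, together with continuity of $e$. I expect this low-regularity case to be the main obstacle.

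Finally, evaluating at $t=0$ gives $\nu'(0)=\langle u'(0),u'(0)/\lVert u'(0)\rVert\rangle=\lVert u'(0)\rVert>0$.
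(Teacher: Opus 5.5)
Your setup is the paper's own: the Hadamard factorization $u=tw$ with $w=\int_0^1u'(st)\,ds\in C^{\beta-1}$, the unit field $e=w/\lVert w\rVert$, and the identity $\nu'=\langle u',e\rangle$, whose right-hand side is $C^{\beta-1}$. For $\beta\ge2$ your product-rule derivation of that identity is complete.

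\textbf{The gap is the range $1\le\beta<2$.} There $e$ is only $C^{\beta-1}$, merely continuous for $\beta=1$ and Hölder for $1<\beta<2$. So $\langle u,e'\rangle$ is undefined and the product rule is unavailable. In that range you only announce a difference-quotient argument and flag it as the main obstacle. The range cannot be skipped: the lemma is applied in the proof of Proposition~\ref{prop: map Pi_n} for every $\beta>\alpha\ge1$, for instance $\beta=3/2$. Also, the estimate you propose, $\lVert a\rVert-\lVert b\rVert=\langle a-b,e_b\rangle+O(\lVert a-b\rVert\,\angle(a,b))$, cannot be used at $t=0$ as stated. There $b=u(0)=0$ has no direction $e_b$, so that point would need a separate argument anyway.

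\textbf{How to close it.} The obstacle disappears once you stop differentiating $e$. You already noted that $u(t)\neq0$ for $t\neq0$. So $\nu=\pm\lVert u\rVert$ is differentiable there by the chain rule, since the norm is smooth off the origin and $u\in C^1$. Its derivative is
\[
\nu'(t)=\Bigl\langle \frac{u(t)}{\nu(t)},\,u'(t)\Bigr\rangle=\langle e(t),u'(t)\rangle ,
\]
because $u(t)/\nu(t)=tw(t)/(t\lVert w(t)\rVert)=e(t)$ for both signs of $t$. At $t=0$ compute directly:
\[
\frac{\nu(h)}{h}=\lVert w(h)\rVert\to\lVert w(0)\rVert=\lVert u'(0)\rVert=\langle u'(0),e(0)\rangle .
\]
Hence $\nu'=\langle u',e\rangle$ on a whole neighbourhood of $0$. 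This function is $C^{\beta-1}$, so $\nu\in C^\beta$ for all $\beta\ge1$ at once, with no case split. This is exactly the paper's proof.
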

\begin{proof}
Write $u(t) = ta(t)$, where $a(t) = \int_0^1 u'(st) ds$. Then $a$ is $C^{\beta-1}$, since $u'(t)$ is $C^{\beta-1}$, and $a$ is non-zero near $0$, since $a(0)=u'(0) \neq 0$ by assumption. For all $t$ near $0$, we have $\nu(t) = t \lVert a(t) \lVert$ so that $\nu$ is $C^{\beta-1}$. If we set $w(t) = a(t)/\lVert a(t) \rVert$, then for $t \neq 0$ we have $u(t)/\nu(t) = (ta(t))/(t\lVert a(t) \rVert) = w(t)$. For $t>0$ resp.\ $t<0$ we have $\nu(t) = \lVert u(t) \rVert$ resp.\ $\nu(t) = -\lVert u(t) \rVert$, so
\[
\nu'(t) = \left\langle \frac{u(t)}{\nu(t)}, u'(t)\right\rangle = \langle w(t), u'(t) \rangle \quad (t \neq 0)
\]
where $\langle \cdot, \cdot \rangle$ denotes the Euclidean inner product. 

Now set $g(t) := \langle w(t), u'(t)\rangle$ for all $t$. Since $w$ and $u'$ are both $C^{\beta-1}$, so too is $g$. Moreover, $g(0) = \langle u'(0)/\lVert u'(0) \rVert, u'(0)\rangle = \lVert u'(0) \rVert$. Thus $\nu'(t) = g(t)$ for $t \neq 0$. It remains only to check that $\nu'(0) = g(0)$. But from $\nu(t) = t \lVert a(t) \rVert$ and $\nu(0) = 0$, we have 
\[
\nu'(0) = \lim_{t \to 0} \frac{\nu(t) - \nu(0)}{t} = \lim_{t \to 0} \lVert a(t) \rVert = \lVert a(0) \rVert = \lVert u'(0) \rVert = g(0).
\]
Hence $\nu' = g$ on a neighbourhood of $0$. Since $g$ is $C^{\beta-1}$, it follows that $\nu$ is $C^\beta$. Finally, $\nu'(0) = \lVert u'(0) \rVert > 0$, so by the inverse function theorem $\nu$ is a local $C^\beta$-diffeomorphism at $0$. 
\end{proof}

\begin{proof}[Proof of Proposition \ref{prop: map Pi_n}]
We first prove that the assignment \eqref{eq: assignment psi_f} gives the desired homomorphism $\Pi_n$. Suppose that $f \in \Centr_{\Diff^0(\Sn)}(K)$. Then the universal property of quotient spaces gives a unique continuous $\psi_f \colon [-\pi/2, \pi/2] \to [-\pi/2, \pi/2]$ such that $\lambda \circ f = \psi_f \circ \lambda$. Applying this to $f^{-1}$ gives $\psi_f^{-1} = \psi_{f^{-1}}$, so that $\psi_f$ is a homeomorphism, and uniqueness gives $\psi_{fg} = \psi_f\circ \psi_g$. Hence we have a homomorphism into $\Diff^0\left([-\pi/2, \pi/2]\right)$. We will now show that if $f$ is $C^\beta$, then $\psi_f$ is also $C^\beta$. First, we prove regularity on the open interval $(-\pi/2, \pi/2)$. Note that the two poles are either fixed or interchanged. Define $\gamma(s) = (\cos(s) e_1, \sin(s))$ for $s \in (-\pi/2, \pi/2)$, where $e_1 \in \mathbb{S}^{n-1}$ is the unit vector $(1,0, \dots,0)$. Then $\lambda(\gamma(s)) = s$, so $\psi_f(s) = \lambda(f(\gamma(s)))$ by the universal property. Note that $f(\gamma(s))$ is also non-polar for non-polar $s$, since $\psi_f$ is a homeomorphism of the interval. Hence $\psi_f = \lambda(f(\gamma(s)))$ is $C^\beta$ away from the poles.

It remains to prove regularity at the endpoints, i.e.\ the two poles. Since $\psi_f$ is a homeomorphism of the interval $[-\pi/2,\pi/2]$, there is $\varepsilon \in \{ -1, 1\}$ such that $\psi_f\left(\pi/2\right) = \varepsilon \pi/2$ and $\psi_f\left(-\pi/2\right) = -\varepsilon \pi/2$. Consider first the north pole $N = (0,1)$ and let $\eta(t) = (\sin(t) e_1, \cos(t))$ for $t$ near $0$. For $t \geq 0$, we have $\eta(t) = \gamma\left( \pi/2 - t\right)$. Set $u(t) = \operatorname{pr}_{\R^n}( f(\eta(t)))$. Then $u(0) = 0$, since $f(N)$ is a pole (either $S$ or $N$). Furthermore, since $f$ is a $C^\beta$-diffeomorphism, $(Df)_N$ is an isomorphism. As $\eta'(0) = (e_1, 0) \neq 0$, we have $(Df)_N(\eta'(0)) \neq 0$. Moreover, the tangent space $T_{f(N)}(\Sn)$ is $\R^n \times \{ 0 \}$, so projection onto the first factor is injective there. Hence $u'(0) \neq 0$. Thus we can apply Lemma~\ref{Lem:endpoint-regularity}, and find that the signed norm
\[
\nu(t) = \begin{cases}
\lVert u(t) \rVert, & t \geq 0,\\
-\lVert u(t) \rVert, & t < 0 ,\\
\end{cases}
\]
is $C^\beta$ near $0$. 

For $t \geq 0$ sufficiently small, we have $\nu(t)=\lVert u(t) \rVert = \cos\left( \psi_f\left(\pi/2 - t\right) \right)$, and since $\psi_f(\pi/2 - t)$ is close to $\varepsilon \pi/2$, this gives 
\[
 \psi_f \left( \frac{\pi}{2} - t\right) = \varepsilon\arccos(\nu(t)).
\]
The right-hand side extends to a $C^\beta$ function for $t$ near $0$, so $\psi_f$ is $C^\beta$ at the north pole $\pi/2$. The same argument at the south pole, using $\eta(t) = (\sin(t) e_1, -\cos(t))$, shows that $\psi_f$ is also $C^\beta$ at $-\pi/2$. Thus $\psi_f$ is $C^\beta$. 

We now check that the assignment \eqref{eq: assignment tilde psi_f} defines the desired $\tilde\Pi_n$. Let $f \in \Centr_{\Diff^\beta(\RP^n)}(\tilde K)$. Since $f$ commutes with $\tilde K$, it maps $\tilde K$-orbits to $\tilde K$-orbits, and so it descends to a unique homeomorphism $\tilde \psi_f \colon [0, \pi/2] \to [0, \pi/2]$ such that $\tilde \lambda \circ f = \tilde\psi_f \circ \tilde\lambda$. Hence $f \mapsto \tilde \psi_f$ defines a homomorphism into $\Diff^0([0,\pi/2])$. It remains to prove that $\tilde\psi_f$ is $C^\beta$. To do this, choose a $C^\beta$-lift $F \colon \Sn \to \Sn$ of $f$, so that $\pi \circ F = f \circ \pi$. Such a lift always exists since $\Sn$ is simply connected, and it is a $C^\beta$-diffeomorphism since $\pi$ is a smooth covering map. Since $F$ is a lift of $f$, it is straightforward to verify that it commutes with the antipodal map $A$.  

Next, we claim that $F$ commutes with $K$. Let $k \in K$. Then since $f$ commutes with the projection of $k$ to $\tilde K$, it follows that the maps $F k$ and $k F$ are lifts of the same map. Thus they differ by either $\id_{\Sn}$ or the antipodal $A$, i.e.\ $F k = A^{\varepsilon(k)} k F$ for some $\varepsilon(k) \in \Z / 2\Z$. The function $\varepsilon \colon K \to \Z / 2\Z$ is continuous since the two possible lifts are separated by the discrete group $\{ \id_{\Sn}, A \} \cong \Z / 2\Z$. But $K \cong \SO(n)$ is a connected group, and $\varepsilon(1) = 0$. Hence $\varepsilon(k) = 0$ for all $k \in K$, so that $Fk = k F$ for all $k \in K$. Thus $F \in \Centr_{\Diff^\beta(\Sn)}(K)$.

Thus, we can apply the previous part, i.e.\ the assignment \eqref{eq: assignment psi_f} to $F$, giving a $C^\beta$-diffeomorphism $\psi_{F} \colon [-\pi/2, \pi/2] \to  [-\pi/2, \pi/2]$ satisfying $\lambda \circ F= \psi_{F} \circ \lambda$. Since $F$ commutes with $A$, we immediately see that $\psi_{F}$ must be odd. Indeed, choose $x \in \Sn$ with $\lambda(x) = s$, and write
\[
\psi_{F}(-s) = \psi_{F}(\lambda(Ax)) = \lambda(F(Ax)) = \lambda(AF(x)) = -\lambda(F(x)) = -\psi_{F}(s),
\]
for all $s \in [-\pi/2, \pi/2]$. 

The two possible lifts of $f$ are $F$ and $A F$, and their quotient maps on $[-\pi/2, \pi/2]$ are respectively $\psi_{F}$ and $-\psi_{F}$. We may hence choose the lift $F$ so that $\psi_{F}([0,\pi/2]) = [0, \pi/2]$. Since
\[
\tilde\lambda \circ \pi = |\lambda|, \qquad \tilde\lambda \circ f = \tilde\psi_f \circ \tilde\lambda, \quad \text{and} \quad \lambda \circ F = \psi_{F} \circ \lambda,
\]
it now follows that $\tilde\psi_f = \psi_{F} \mid_{[0,\pi/2]}$. Since $\psi_{F}$ is a $C^\beta$-diffeomorphism, it follows that $\tilde\psi_f$ is also a $C^\beta$-diffeomorphism. Thus the map $f \mapsto \tilde\psi_f$ is our desired homomorphism $\tilde \Pi_n$.

We now turn to the proofs of the abelianity of $\ker(\Pi_n)$  and $\ker(\tilde\Pi_n)$. Let $f \in \ker(\Pi_n)$. Since $\psi_f = \id_{[-\pi/2,\pi/2]}$, the map $f$ preserves every latitude. Hence, for every $s\in \left( -\pi/2, \pi/2\right)$ there is a map $f_s \colon \mathbb{S}^{n-1} \to \mathbb{S}^{n-1}$ such that $f(\cos(s) u, \sin(s)) = (\cos(s) f_s(u), \sin(s))$. Since $f$ commutes with $K$, we have $f_s(g\cdot u) = g \cdot f_s(u)$ for all $g \in K$. 

Suppose that $n \geq 3$. Take $e_1 \in \mathbb{S}^{n-1}$. Every element $g$ of the stabilizer $H\leq K$ of $e_1$ also fixes $f_s(e_1)$, since $f_s(e_1) = f_s(g \cdot e_1) = g \cdot f_s(e_1)$. But since the subgroup fixing $e_1$ acts by rotations on the orthogonal complement of $e_1$, the only two points of $\mathbb{S}^{n-1}$ fixed by this $H$ are $e_1$ and $-e_1$ (note that if $n=2$, then this conclusion fails). Hence $f_s(e_1) = \varepsilon(s) e_1$ for some $\varepsilon(s) \in \{ -1, 1\}$. 

For arbitrary $u \in \mathbb{S}^{n-1}$ choose some $g \in K$ with $g \cdot e_1 = u$ by transitivity. Then $f_s(u) = f_s(g\cdot e_1) = g \cdot f_s(e_1) = \varepsilon(s) u$. Since $f$ is continuous, the sign $\varepsilon(s) \in \{ -1, 1\}$ varies continuously with $s$, and since the interval $\left( -\pi/2, \pi/2\right)$ is connected, it follows that $\varepsilon(s)$ is constant. Thus either $f(v,t) = (v,t)$ or $f(v,t) = (-v,t)$ for all $(v,t) \in \Sn$. Thus $\ker(\Pi_n) \cong \Z / 2\Z$. 

Now assume that $n =2$. Each non-polar latitude is then $\mathbb{S}^1$, and hence $f_s \colon \mathbb{S}^1 \to \mathbb{S}^1$ commutes with every rotation. If $a_s := f_s(1) \in \mathbb{S}^1$, then for every $z \in \mathbb{S}^1$ we have $f_s(z) = f_s(z \cdot 1) = z \cdot f_s(1) = za_s$. Hence $f_s$ is also a rotation. Thus, for any $h \in \ker(\Pi_2)$, then on each latitude the restrictions of $f$ resp.\ $h$ are rotations, and so they commute. They also fix the two poles, so that $fh=hf$ on all of $\mathbb{S}^2$. Since $f, h \in \ker(\Pi_2)$ were arbitrary, it follows that $\ker(\Pi_2)$ is abelian.

Let $f \in \ker(\tilde\Pi_n)$. Choose the lift $F$ satisfying $\psi_{F}([0,\pi/2]) = [0, \pi/2]$ as above. Then $\tilde\psi_f = \psi_{F} \mid_{[0,\pi/2]}$, i.e.\ $\psi_{F}(s) = s$ for all $0 \leq s \leq \pi/2$. Since $\psi_{F}$ is odd, it follows that $\psi_{F}(s) = s$ also for all $-\pi/2 \leq s \leq \pi/2$. Thus $F \in \ker(\Pi_n)$. Since we just proved that this kernel is abelian, the fact that $\ker(\tilde\Pi_n)$ is also abelian now follows from the commutativity of its lifted elements. 

Finally, suppose that $n \geq 3$. We have just proved that $\ker(\Pi_n) = \{ \id_{\Sn}, J\}$, where $J(v,t) = (-v,t)$. Thus every element of $\ker(\tilde\Pi_n)$ is the projection of $\id_{\Sn}$ or $J$. The map $J$ commutes with the antipodal map $A$, and hence descends to an involution $\tilde J$ of $\RP^n$. This involution is non-trivial, centralizes $\tilde K$, and acts trivially on the quotient interval $[0, \pi/2]$. Thus if $n \geq 3$, we have $\ker(\tilde \Pi_n) = \{\id_{\RP^n}, \tilde J\} \cong \Z / 2\Z$, as desired. This finishes the proof of the proposition.\end{proof}

\section*{Acknowledgments} \noindent The authors wish to thank Sang-hyun Kim (KIAS) for helpful discussions and encouragement. 

\bibliographystyle{plain}
\bibliography{sn-critical-regularity.bib}

\end{document}